\newtheorem{Theorem}{Theorem}
\newtheorem{Lemma}[Theorem]{Lemma}
\newtheorem{Corollary}[Theorem]{Corollary}
\newtheorem{Proposition}[Theorem]{Proposition}
\theoremstyle{remark}
\newtheorem{Example}{Example}
\newtheorem{Remark}[Example]{Remark}
\def\pgf{\emph{p.g.f.~}}
\def\iid{\emph{i.i.d.}~}
\def\a{{\mathfrak a}}
\def\A{{\mathfrak A}}
\def\B{{\mathfrak B}}
\def\b{{\mathfrak b}}
\def\w{{\mathfrak w}}
\def\m{{\mathfrak m}}
\def\v{{\mathfrak v}}
\def\u{{\mathfrak u}}
\def\p{{\mathfrak p}}
\def\n{{\mathfrak n}}
\def\v{{\mathfrak v}}
\def\M{{\mathfrak M}}
\def\V{{\mathfrak V}}
\def\K{{\mathfrak{K}}}
\providecommand{\keywords}[1]
{
  \small	
  \textbf{\textit{Keywords---}} #1
}
\def\pgf{\emph{p.g.f.~}}
\def\iid{\emph{i.i.d.}~}
\newcommand*{\dt}{%
{\mbox{\raisebox{0.07em}{\large\bfseries .}}}}
\begin{document}
\begin{frontmatter}
\title{
A Class of Long Range  Circulant Random Walks on $\mathbb{Z}_q^d$\\
}
 \author{Robert Griffiths}
\affiliation{
organization={School of Mathematics},
            addressline={Monash University},
            country={Australia}
         }
	
\ead{Bob.Griffiths@Monash.edu}
 \author{Shuhei Mano}
\affiliation{
organization={The Institute of Statistical Mathematics},
            addressline={Tokyo},
            country={Japan}
         }	
\ead{smano@ism.ac.jp}

\date{}

\begin{abstract}
This paper studies a class of long range random walks 
$(X_t)_{t=0}^\infty$ on the direct product of cyclic groups $\mathbb{Z}_q^d$ for $d\ge 1$ and $q\ge 2$. 
$X_{t+1} = X_t + Z_t \mod q$,
with $(Z_t)_{t=1}^\infty$ \iid on $\{0,1,\ldots, q-1\}^d$.
Entries of $Z_t$ are updated by circulant matrices, possibly with dependence.
Multiple entries of $Z_t$ can be non-zero in a transition.
An emphasis is on finding the structure of such random walks and spectral expansions for the transition functions.
An extension is made to processes on a $d$-dimensional torus,  scaling entries in $\{0,1,\ldots, q-1\}$ by dividing by $q$ and letting $q\to \infty$.
The state space is then $d$ circles of unit perimeter, where $0$ and $1$ are identified as the same point in each circle.
\color{black}

 If the entries of $X_t$ are exchangeable then a grouping of $X_t$ is made by taking counts of the types $0,\ldots, q-1$ in $X_t$. In this grouping the multivariate Krawtchouk polynomials become the eigenvectors. Examples consider cutoff times and mixing times in these processes. A limit form for the multivariate Krawtchouk polynomials is used to find a central limit theorem for the transition distributions in the grouped model as $d \to \infty$.
\medskip

\noindent
\keywords \ Circulant random walks, Random walk on a Hamming graph, Long range random walk, multivariate Krawtchouk polynomials.
\end{abstract}
%
\end{frontmatter}

\section{Introduction}
A random walk on 
the direct product of additive cyclic groups, $\mathbb{Z}_q^d$ for $d\ge 1$ and $q\ge 2$, \color{black}
is constructed as
\begin{equation}
X_{t+1} = X_t + Z_t \mod q,
\label{rweqn:000}
\end{equation}
where $(Z_t)_{t=1}^\infty$ are \iid on $\{0,1,\ldots, q-1\}^d$. 
Entries of $Z_t$ are updated by circulant transition probability matrices.
Denote ${\cal V}_q = \{0,1,\ldots,q-1\}$ and ${\cal V}_{q,d}={\cal V}_q^d$.
${\cal V}_{q,d}$ is the set of elements of $\mathbb{Z}_q^d$. 
These random walks are interesting because they contain and generalise simple random walks derived from Ehrenfest urn models with one or more urns, and simple random walks on Hamming graphs.
In this generalisation to long range random walks simultaneous updates of many coordinates can be made where multiple entries of $Z_t$ are non-zero.
The entries of $Z_t$ are not necessarily independent or identically distributed.
The eigenstructure of the transition functions of $(X_t)$ naturally leads to spectral analysis on $\mathbb{Z}_q^d$. If the entries of $Z_t$ have an exchangeable distribution then the eigenfunctions in this class of random walks are multivariate Krawtchouk polynomials which are also interesting in representation theory.
If states  $j \in {\cal V}_q$ are identified with complex numbers $\theta_j=e^{2\pi i/q\dt  j}$, $i=\sqrt{-1}$, then adding an increment $Z_t$ is equivalent to multiplying the corresponding complex states.
Denote a complex transform acting on $j \in {\cal V}_q$ as
$
\widehat{j} = \theta_j=e^{2\pi i/q\dt j}
$.
Then (\ref{rweqn:000}) is equivalent to a Hadamard product
\begin{equation}
\widehat{X}_{t+1} = \widehat{X}_t \odot \widehat{Z}_t.
\label{rweqn:00}
\end{equation}
If $q=2$ the states of $\widehat{X}_t$ are $e^{2\pi i\ 0/2}=1$
and $e^{2\pi i\ 1/2}= e^{\pi i\ } = -1$ corresponding to spins. Adding increments which are $0$ or $1$ is the same as multiplying by spins to get another spin in a transition. In a model on ${\cal V}_q$ the $q^{\text{th}}$ roots of unity $e^{2\pi i/q\dt  j}$ extend the role of $-1,1$ when $q=2$. The set $\{e^{2\pi i/q\dt  j}\}_{j=0}^{q-1}$ is a multiplicative cyclic group $C_q$ with generator $g=e^{2\pi i/q}$.
\color{black}
The stationary distribution in the random walks (\ref{rweqn:000}) is uniform on  ${\cal V}_{q,d}$, however it is possible that
the random walk is not ergodic
because of the initial configuration and/or dependence structure of $Z_t$. There could be periodic behaviour or the full state space of ${\cal V}_{q,d}$ may not be generated.
Long range random walks when $d=2$ and $(X_t)$ is a random walk in $\{0,1\}^\mathbb{Z}$ are studied in \cite{CG2021}. Mixing times to the stationary distribution of $d$-dimensional independent  Bernoulli$(1/2)$ random variables are found.

If $d=1$ the random walk (\ref{rweqn:000}) is 1-dimensional with state space ${\cal V}_q$ and has a circulant transition matrix. Then conditional on $X_t=a$, $X_{t+1}=a + V \mod q$, where $V$ is a random variable in ${\cal V}_q$ which has the distribution of $Z_t$.
Circulant transition matrices are well studied and their eigen-structure, which is important, is well known.

An emphasis in this paper is on finding and understanding spectral expansions of random walks (\ref{rweqn:000}) on $\mathbb{Z}_q^d$.

In Section \ref{Circulant} we give a characterization of \emph{all} possible transition functions of Markov chains which have eigenfunctions $(\theta_r^x)$, the same as in a circulant random walk. This is  shown to be equivalent to a characterization of possible eigenvalues which have the form $\mathbb{E}\big [\theta_r^V]$ where $V$ is a non-negative random variable on ${\cal V}_q$.

In Subsection \ref{ctsstate} the process on ${\cal V}_q$ is extended to a process on a circle of unit perimeter, where $0$ and $1$ are identified as the same point in each circle. Entries in ${\cal V}_q$ are scaled by dividing by $q$ and letting $q\to \infty$,  where $0$ and $1$ are identified as the same point. This is a particularly interesting process.

In Section \ref{MCVN} the full long range random walk (\ref{rweqn:000}) on $\mathbb{Z}_q^d$ is studied. The eigenvectors of these processes have the form $(\theta_1^{x\dt r})$ where $x$ is in the state space of the Markov chain and $r$ is an index, $x,r \in {\cal V}_{q,d}$ and $x\dt r$ is the inner product of $x$ and $r$. They constitute the set of characters of irreducible representations of $\mathbb{Z}_q^d$. 
An extension of the characterization in 1-dimension shows that \emph{all} such Markov chains  have eigenvalues
$\big (\mathbb{E}\big [\theta_1^{V\dt r}\big ]\big )$, where $V \in {\cal V}_{q,d}$ is a non-negative random variable. In Subsection \ref{qexpansion:0} the entries of $(X_t)$ are assumed to be exchangeable. Then we let
$M_t = (M_{t}[j])_{j=0}^{q-1}$, where $M_{t}[j]$ counts the number of entries equal to $j$ in $X_t$.
The transition function for $M_t=m$ to $M_{t+1}=n$ has a spectral expansion with eigenvalues which are multivariate Krawtchouk orthogonal polynomials 
$\big (Q_l(n;(\theta_k)\big )$
on the stationary 
Multinomial$(d,1/q)$ distribution. These polynomials have been studied by various authors. They were first constructed by \cite{G1971},  then later articles about them are \cite{DG2014,DG2019}.
The particular form that we use is one where the elementary basis is $(\theta_1^r)$, which 
have been studied by \cite{M2004}, and appeared in \cite{M2018} as eigenfunctions arising from stochastic urn models. \citeauthor{M2004} takes an algebraic approach of harmonic analysis which is briefly discussed in this paper. 
The multivariate Krawtchouk polynomials are particularly interesting and we spend effort in presenting them in a self-contained way.
In Remark \ref{Hamming:23} a random walk in $ {\cal V}_{q,d}$ is considered where transitions between $x,y \in {\cal V}_{q,d}$ only depend on a Hamming distance defined by $\partial(x,y) = |\{k:x[k]\ne y[k], k \in [d]\}|$.
This model is important in a quantum walk in \cite{GM2025}.
It is a random walk on $H(d,q)$, a Hamming graph, studied by Hora \cite{H1997}. The treatment in \cite{GM2025}, \cite{H1997} \color{black} is algebraic, whereas our treatment is probabilistic. 

 A limit for the polynomials when $d \to \infty$ is found in Lemma \ref{Kplimit}, which leads to calculations of cutoff time in one model in Example \ref{simple:000} and mixing times bounds in another Example \ref{bigchi}.
The existence of a cutoff time in the model of Example \ref{simple:000} is consistent with a condition for cutoff to exist in 
Cayley random walks on groups in \cite{HOT2026}.
A characterization of transition functions which have these multivariate Krawtchouk polynomials as eigenfunctions is found in
Proposition \ref{Generaltrans}. In Subsection \ref{CLTQ}, Lemma \ref{QCLT} a central limit theorem form of the multivariate Krawtchouk polynomials is derived. 

$(X_t)$ can be infinite dimensional provided $(Z_t)$ is well defined.  A brief account is made when the entries of $Z_t$ are from a de Finetti sequence.

The principal contribution of this paper is a probabilistic study of the structure of a class of long range random walks on $\mathbb{Z}^d_q$ which have multivariate Krawtchouk polynomials as eigenfunctions. This approach complements the existing algebraic studies of \cite{M2004,MT2004,M2011,M2018} that we connect with. Random walks that we study have transition functions which belong to a convex set. The eigenvalues of transition functions in this convex set are characterized. Applications are made to cutoff times in the random walks and a central limit theorem when $d\to \infty$.
\section{Circulant random walks}\label{Circulant}
Let $P$ be a $q\times  q$ circulant transition probability matrix, which is real,  
with first non-negative row $v=(v_j)_{j=0}^{q-1}$.  A circulant matrix $P$ has the form
\begin{equation}
P=
\begin{bmatrix}
v_0&v_1&\cdots &v_{q-1}\\
v_{q-1}&v_0&\cdots&v_{q-2}\\
\cdots&\cdots&\cdots &\cdots\\
\cdots&\cdots&\cdots &\cdots\\
v_1&v_2&\cdots &v_0
\end{bmatrix}.
\label{circulantmatrix:00}
\end{equation}
The rows correspond to clockwise rotations in ${\cal V}_q$.  It is straightforward to see that the stationary distribution of $P$ is uniform on ${\cal V}_q$ because $eP=e$, where $e$ is a row vector of unit entries.
 Denote $\theta_l = e^{2\pi i/q\dt  l}$, as the $l^{\text{th}}$ power of the $q^{\text{th}}$ root of unity.
The eigenvector pairs and eigenvalues of a circulant transition matrix (\ref{circulantmatrix:00})
are respectively $(\theta_r^a,\theta_r^{-b})$ and
\begin{equation}
\eta_r= \sum_{l=0}^{q-1}v_l\theta_r^l.
\label{eigenv:00a}
\end{equation}
A spectral expansion of $P$ is therefore
\begin{align}
P_{ab} = \frac{1}{q}\sum_{r=0}^{q-1}\eta_r \theta_r^a\theta_r^{-b}
= \frac{1}{q}\big \{1 + \sum_{r=1}^{q-1}\eta_r \theta_r^a\theta_r^{-b}\big \}.
\label{spectral:00}
\end{align}
 See, for example,  \cite{F1991} Vol 1, XVI.2.
The eigenvectors are $(\theta_r)_{r=0}^{q-1}$, orthogonal on the uniform distribution for 
$a \in {\cal V}_q$. Complex conjugates are taken in the orthogonal sum.
\[
\frac{1}{q}\sum_{a=0}^{q-1}\theta_r^a\bar{\theta}_s^a=
\frac{1}{q}\sum_{a=0}^{q-1}
e^{2\pi i/q  \dt ar}e^{-2\pi i/q\dt  as}
= \begin{cases}
1&\text{if~}r=s\\
\frac{1}{q}\cdot
\frac{ e^{2\pi i\ (r-s)}-1 }{e^{2\pi i/q\dt (r-s)}-1 } = 0
&\text{if~}r\ne s
\end{cases}.
\]
We could write the eigenvalues (\ref{eigenv:00a}) as expected values taking $V$ to be a random variable 
such that $\mathbb{P}\big (V=j \big ) = v_j$, 
$j\in {\cal V}_q$ so
\begin{equation}
\eta_r = \mathbb{E}\big [\theta_r^V\big ].
\label{eigenv:00aa}
\end{equation}
Then
\begin{equation}
P_{ab} = \mathbb{E}\Big [\frac{1}{q}
\sum_{r=0}^{q-1}\theta_r^V\theta_r^a\theta_r^{-b}\Big ] 
= \mathbb{E}\big [\frac{1}{q}
\sum_{r=0}^{q-1}\theta_1^{r(V+a-b)}\big ].
\label{spectral:01}
\end{equation}
$Z_t$ in (\ref{rweqn:00}) has the distribution of $V$.
A notation used later for entries of a random transition matrix is
\begin{equation}
P_{ab}(V) = \frac{1}{q}\sum_{r=0}^{q-1}\theta_r^V\theta_r^a\theta_r^{-b}.
\label{PV:00}
\end{equation}
Conditional on $V=v$ 
\begin{equation}
P_{ab}(v) = 
\begin{cases}
1&\text{if~}b=a+v \mod q\\
0&\text{otherwise}
\end{cases},
\label{PVP:00}
\end{equation}
though not immediately obvious from (\ref{PV:00}).

Let $R$ be uniform on ${\cal V}_q$. A way to write (\ref{spectral:01}) as real valued, knowing $P_{ab}$ is real is
\begin{equation*}
P_{ab}=\mathbb{E}\big [\cos(2\pi/q\ \dt R(V+a-b)\big ]
\end{equation*}
where expectation is over $R$ and $V$. 
\subsection{Characterization of transition functions}
A Lancaster characterization of transition functions with the same eigenvectors as 
(\ref{spectral:01}), showing that \emph{all} such transition matrices are mixtures of circulant matrices follows. Characterizing transition functions with given eigenfunctions is equivalent to characterizing possible eigenvalues of the transition matrix.
\begin{Proposition}\label{pcirc:0}
$P^\circ$ with entries
\begin{equation*}
P^\circ_{ab} = \frac{1}{q}
\sum_{r=0}^{q-1}\rho_r\theta_r^a\theta_r^{-b}
\end{equation*}
is non-negative and therefore a transition probability matrix if and only if
$ \rho_r =  \eta_r = \mathbb{E}\big [\theta_r^V\big ]$ for a random variable $V$ on ${\cal V}_q$.\\
The probability distribution of $V$ satisfies
\begin{equation}
\mathbb{P}\big (V=v\big )
= \frac{1}{q}\sum_{r=0}^{q-1}\eta_r\theta_r^{-v}.
\label{inv:000}
\end{equation}
\end{Proposition}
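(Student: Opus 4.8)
The plan is to prove both directions of the equivalence using the orthogonality relations for the roots of unity already recorded in the excerpt, exploiting the fact that $P^\circ$ is forced to be circulant once it has the stated spectral form.

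First I would establish the ``if'' direction, which is essentially immediate: if $\rho_r = \eta_r = \mathbb{E}[\theta_r^V]$ for some ${\cal V}_\d$-valued $V$, then by linearity $P^\circ_{ab} = \mathbb{E}\big[\frac{1}{\d}\sum_{r=0}^{\d-1}\theta_r^{V+a-b}\big]$, which by the conditional identity \eqref{PVP:00} equals $\mathbb{E}[P_{ab}(V)]$, a mixture of $\{0,1\}$-valued circulant permutation matrices, hence a bona fide transition probability matrix with non-negative entries summing to one along each row.

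For the ``only if'' direction I would first observe that the spectral form $P^\circ_{ab} = \frac{1}{\d}\sum_r \rho_r \theta_r^a\theta_r^{-b}$ depends on $a,b$ only through $a-b \bmod \d$, so $P^\circ$ is circulant with first row $v = (v_j)$ given by $v_j = \frac{1}{\d}\sum_{r=0}^{\d-1}\rho_r \theta_r^{-j}$ — this is exactly the inversion formula \eqref{inv:000} with $V$ named. Now suppose $P^\circ$ is a transition probability matrix. Row sums equal one: $\sum_j v_j = \frac{1}{\d}\sum_r \rho_r \sum_j \theta_r^{-j} = \rho_0$, using that $\sum_{j=0}^{\d-1}\theta_r^{-j}$ is $\d$ when $r=0$ and $0$ otherwise; so $\rho_0 = 1$. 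Non-negativity of $P^\circ$ means each $v_j \ge 0$, and with $\sum_j v_j = 1$ this says $(v_j)_{j=0}^{\d-1}$ is a probability distribution on ${\cal V}_\d$. Defining $V$ by $\mathbb{P}(V=v) = v_v$ and reading \eqref{eigenv:00a}--\eqref{eigenv:00aa} backwards gives $\rho_r = \sum_{j} v_j \theta_r^j = \mathbb{E}[\theta_r^V]$, which is the claim; the displayed distribution of $V$ in \eqref{inv:000} is then just the inversion formula we already wrote down.

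I do not expect a serious obstacle here; the only points requiring a little care are (i) verifying that the hypothesized spectral form genuinely forces circulance (so that ``non-negative entries $\Rightarrow$ probability vector'' is legitimate), and (ii) being careful with complex conjugates — $\overline{\theta_r} = \theta_r^{-1} = \theta_{\d-r}$ — so that the reality and non-negativity of the $v_j$ are used correctly and the sum $\sum_r \theta_r^{-j}$ is evaluated with the right index convention. Everything else is a direct application of the finite-Fourier orthogonality already stated in the section.
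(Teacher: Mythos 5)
Your proof is correct and follows essentially the same route as the paper: sufficiency via the mixture-of-circulants observation, and necessity by identifying $V$ with the distribution of the chain's first step from state $a=0$, i.e.\ the first row $(P^\circ_{0j})_j$. The paper extracts $\rho_r=\mathbb{E}[\theta_r^V]$ directly from the eigenfunction relation $\mathbb{E}[\theta_r^B\mid a]=\rho_r\theta_r^a$ at $a=0$, whereas you run the equivalent finite-Fourier inversion to exhibit $v_j=\frac{1}{\d}\sum_r\rho_r\theta_r^{-j}$ as a probability vector; your explicit check that $\rho_0=1$ from the row sums is a small point the paper leaves implicit.
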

\begin{proof}
The sufficiency is clear from the circulant construction with $\rho_r = \eta_r$.\\ For the necessity let $B$ given $A=a$ be random variables distributed as $P^\circ_{ab}$, $b\in{\cal V}_q$.
\[
\mathbb{E}\big [\theta_r^B\mid a\big ] = \rho_r\theta_r^a.
\]
Setting $a=0$
\[
\rho_r = \mathbb{E}\big [\theta_r^B\mid a=0\big ]
\]
so $V$ in $\eta_r$ is taken to have the distribution of $B\mid a=0$.\\
The series expansion (\ref{inv:000}) is an inverse Fourier transform of 
(\ref{eigenv:00aa}) so (\ref{inv:000}) holds.
\end{proof}
The set of transition probability matrices ${\cal P}= \big \{P^\circ{(\rho_r)}\big \}$ is a convex set because if $P^\circ{(\rho_r)},P^\circ{(\rho^\prime_r)} \in {\cal P}$ then
$\lambda P^\circ{(\rho_r)} + (1-\lambda)P^\circ{(\rho^\prime_r)} \in {\cal P}$ for $\lambda \in [0,1]$.
Extreme points of ${\cal P}$ are when $\rho_r = \theta_r^v$ and then $V=v$ with probability 1.
In these extreme points $P_{ab} = 1$ if $b=a+V\!\!\! \mod q$ and zero otherwise.
They correspond to Markov chains where 
\begin{equation*}
\widehat{X}_{t+1}= \widehat{X}_t\theta_v.
\end{equation*}
If $v=0$ there is no change in transitions because $\theta_0=1$.
Another eigenvalue property is that if $P^\circ{(\rho_r)},P^\circ{(\rho^\prime_r)} \in {\cal P}$, then $P^\circ{(\rho_r\rho_r^\prime)} \in {\cal P}$.
This follows because 
\[
P^\circ_{ab}(\rho_r\rho_r^\prime )= \sum_{c=0}^{q-1}
P^\circ_{ac}(\rho_r)P^\circ_{cb}(\rho_r^\prime).
\]
\begin{Proposition} A stationary distribution for the random walk (\ref{rweqn:00}) is the uniform distribution on ${\cal V}_q$.
\end{Proposition}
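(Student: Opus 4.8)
The plan is to verify invariance of the uniform measure directly, by conditioning on the increment and using that translation on $\mathbb{Z}_\d^\N$ is a bijection. Write $u$ for the uniform probability measure on the state space ${\cal V}_{\d,\N}$ of (\ref{rweqn:00}) (which for $\N=1$ is ${\cal V}_\d$, the circulant case of this section). First I would condition on $Z_t=v$: by (\ref{rweqn:000}) one step is the map $x\mapsto x+v \bmod \d$, which is a permutation of the finite set ${\cal V}_{\d,\N}$ (its inverse being subtraction of $v$), so it leaves $u$ invariant; equivalently, by (\ref{PVP:00}) the conditional kernel $P(v)$ is a permutation matrix and $uP(v)=u$.

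Next I would remove the conditioning. The one-step transition kernel of (\ref{rweqn:000}) is the mixture $P=\mathbb{E}\big[P(Z_t)\big]$ of these permutation kernels against the law of $Z_t$; this representation is valid whether or not the entries of $Z_t$ are independent or identically distributed, since we condition on the whole vector. By linearity $uP=\mathbb{E}\big[uP(Z_t)\big]=\mathbb{E}[u]=u$, so $u$ is a stationary distribution.

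I would also note the conclusion is immediate from the spectral expansion already recorded: in (\ref{PV:00})--(\ref{spectral:01}) the $r=0$ term is constant in $a,b$ with $\eta_0=\mathbb{E}[\theta_0^V]=1$, so $u$, being proportional to the left eigenvector $(\theta_0^a)\equiv 1$, is invariant, the remaining terms integrating to zero against $u$ by orthogonality of the roots of unity. In the multiplicative form (\ref{rweqn:00}) this is just the statement that Hadamard-multiplying a uniformly random element of the product group $C_\d^\N$ by the independent element $\widehat{Z}_t$ returns a uniformly random element.

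I do not expect a genuine obstacle: the proof is a one-line bijection argument, and the stationarity of $u$ was in effect asserted already in the introduction and in Proposition \ref{pcirc:0}. The only point worth making explicit is that allowing dependence among the coordinates of $Z_t$ costs nothing, because conditioning on $Z_t=v$ collapses each step to a single translation of ${\cal V}_{\d,\N}$.
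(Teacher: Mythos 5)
Your proof is correct and is essentially the paper's argument: the paper simply observes that a circulant $P$ satisfies $eP=e$ (i.e.\ is doubly stochastic), which is exactly the fact your decomposition of $P$ into a mixture of permutation (translation) kernels makes explicit. The only differences are presentational --- you spell out why $eP=e$ holds, note that it extends to ${\cal V}_{\d,\N}$, and add the redundant spectral reading, none of which changes the substance.
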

\begin{proof}
The stationary distribution is clear because if $P$ is a circulant matrix (\ref{circulantmatrix:00}) and $e$ a row vector of unit entries, then $eP=e$.
\end{proof}
Here are some examples of 1-dimensional random walks which are well known.
\begin{Example} A random walk when $q=2$.\\ $V$ is Bernoulli$(v_1)$, and  the $2^{\text{th}}$ roots of unity are $\eta_0=\theta_0=1, \eta_1=\theta_1=-1$. $X_{t+1} = X_t + V_t \mod 2$, where $(V_t)$ are \iid.
  Then $\widehat{X}_{t+1} = \widehat{X}_t\cdot (-1)^{V_t}$. $X_t$ has state space $\{0,1\}$ and $\widehat{X}_t$ has state space $\{-1,1\}$.

\end{Example}
\begin{Example}\label{Example:1}
Jumps are one step to the right. $v_1=1$. The 'random walk' is deterministic. Transitions are made
\[
0\to 1,\ 1 \to 2,\ \ldots ,\ q-2\to q-1,\ q-1 \to 0.
\]
If $X_t=l$ then $X_{t+1}=l+1$.
, equivalently $\widehat{X}_t=\theta_l$ so $\widehat{X}_{t+1} = \theta_l\theta_1$ and 
$\widehat{X}_t = \theta_1^t$ taking $\widehat{X}_0=1$.
The eigenvalues are $\eta_r = \theta_r.$
\end{Example}
\begin{Example}\label{Example:2}
This example is of a symmetric random walk with lazyness and unit jumps. Let $\gamma \in [0,1]$.
Jumps are one unit right or left with equal probability. Then
$v_0=1-\gamma,\ v_1= \gamma/2,\  v_{q-1}= \gamma/2$ and so
\[
\eta_r = (1-\gamma)\theta_1^0 
+ \frac{\gamma}{2}(\theta_1^1 + \theta_1^{-1})
= 1-\gamma + \gamma \cos (2\pi r/q).
\]
It is convenient to take subscripts of $v$\!\!\! $\mod q$ so $v_{-1} \equiv v_{q -1 }$.
The random walk is ergodic if $\gamma < 1$ or $\gamma=1$ and $q$ is odd.\\
\end{Example}
\begin{Example}\label{Example:3}
Uniform on ${\cal V}_q$.
\[
\eta_r = \frac{1}{q}\sum_{k=0}^{q-1}\theta_r^k = \delta_{r0}.
\]
The Markov chain then mixes in one step because 
$P^\circ_{ab}= \frac{1}{q}$ for all $b\in {\cal V}_q$.
This example is a special case of Example 2.2 in \cite{GM2025}.
\end{Example}
\begin{Example}\label{Example23}
Jumps are made one unit to the right or left with probabilities
$
\beta\gamma/2,\beta\gamma/2
$ or  with probability $1-\beta\gamma$ a uniform choice is made on ${\cal V}_q$.\\
Then 
\[
\eta_r =
\begin{cases}
 1&r=0\\
 \beta\gamma \cos (2\pi r/q)
 &r=1,\ldots q-1
  \end{cases}.
 \]
 The eigenvalues $\eta_r$ are a mixture of those in Examples \ref{Example:2} and
 \ref{Example:3}.\\
 This is an ergodic random walk.
\end{Example}
\begin{Example}\label{Example:symmetric}
Symmetric positive and negative jumps.
If $v_j=v_{-j}$ then 
\begin{equation}
\eta_r = v_0 + \sum_{l=1}^{q-1}v_le^{2\pi i/q \dt   lr}\
= v_0 + \sum_{l=1}^{q-1}v_{q-l}e^{2\pi i/q \dt  lr}\
= v_0 + \sum_{l=1}^{q-1}v_le^{-2\pi  i/q\dt lr}.
\label{sum:a00}
\end{equation}
Averaging the first and last expressions in (\ref{sum:a00})
\begin{equation}
\eta_r = v_0 + \sum_{l=1}^{d-1}v_l\cos(2\pi lr/q).
\label{symjump:00}
\end{equation}
Note that $\eta_0=1$. The eigenvalues are then real and $\eta_r=\eta_{q-r}$. When the eigenvalues are real $P$ is Hermite.
There are $\lfloor \frac{q-1}{2}\rfloor$ repeated eigenvalues
  and if $q$ is even there is a single eigenvalue $\eta_{q/2}$. 
$P$ has a real spectral expansion with 
\begin{align*}
P_{ab} &= \frac{1}{q}\big \{1 + \sum_{r=1}^{q-1}\eta_r\cos(2\pi r(a-b)/q)\big \}\label{symjump:01a}
\\
&= \frac{1}{q}\big \{1 + \sum_{r=1}^{q-1}\eta_r 
\big (\cos (2\pi r a/q)\cos (2\pi r b/q) + 
\sin(2\pi ra/q)\sin(2\pi rb/q) \big )\big \}.
\end{align*}
The expression  follows by taking the real part of $P_{ab}$ in (\ref{spectral:00}) knowing that the eigenvalues are real.
Note the reversibility condition that $P_{ab}=P_{ba}$ $\iff$ $V$ has symmetric positive and negative jumps $\iff$ $(\eta_r)$ is real. If $q$ is prime, the random walk is ergodic (Proposition \ref{Ergodic}). If $v_1=\cdots = v_{q-1}$
 it reduces to Example 2.1 in \cite{GM2025} with $d=1$. It is an ergodic random walk.
\end{Example}
\begin{Proposition}\label{Ergodic}
    A sufficient condition for a reversible random walk to be ergodic is that $q$ is prime and larger than 2, or $q=2$ with $v[0]>0$.
\end{Proposition}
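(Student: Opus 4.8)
The plan is to read ergodicity off the eigenvalues $(\eta_r)_{r=0}^{\d-1}$ of the circulant matrix $P$, using reversibility to pin those eigenvalues to the real interval $[-1,1]$ and then showing that, under the stated hypotheses on $\d$, none of $\eta_1,\dots,\eta_{\d-1}$ can equal $\pm1$ — apart from the degenerate walk $P=I$, which is reversible but never ergodic and which the statement tacitly excludes (equivalently, I assume $v[0]<1$).

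First I would record the reduction. By the spectral expansion (\ref{spectral:00}), $P^t_{ab}=\frac1\d\big(1+\sum_{r=1}^{\d-1}\eta_r^{\,t}\theta_r^a\theta_r^{-b}\big)$, so if $|\eta_r|<1$ for every $r\in\{1,\dots,\d-1\}$ then $P^t$ converges entrywise to the matrix with all entries $1/\d$, which is strictly positive; and a finite Markov chain is ergodic precisely when some power of $P$ is entrywise positive, equivalently when $P^t$ converges to a strictly positive matrix. Hence it suffices to show $|\eta_r|<1$ for all $r\neq0$. Next, reversibility is equivalent to $v_j=v_{-j}$ for all $j$ (cf.\ Example \ref{Example:symmetric}), so each $\eta_r=\sum_l v_l\theta_r^l$ is a convex combination of the unit-modulus numbers $\theta_r^l$, $l\in\operatorname{supp}(V)$, and is real. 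Thus $|\eta_r|\le1$, and equality in this triangle inequality forces all the $\theta_r^l$ with $v_l>0$ to equal one common point of the unit circle; being real, that point is $+1$ or $-1$, so $|\eta_r|=1$ implies $\eta_r\in\{1,-1\}$ and $e^{2\pi i rl/\d}$ is constantly $\pm1$ on $\operatorname{supp}(V)$.

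It then remains to rule out $\eta_r=1$ and $\eta_r=-1$ for $1\le r\le\d-1$. Suppose $\d$ is an odd prime. If $\eta_r=-1$ then $e^{2\pi i rl/\d}=-1$, i.e.\ $2rl/\d$ is an odd integer, for every $l\in\operatorname{supp}(V)$; but then $\d\mid 2rl$, and since $\d$ is odd this gives $\d\mid rl$, making $2rl/\d$ even — a contradiction, so $\eta_r\neq-1$. If $\eta_r=1$ then $\d\mid rl$ for every $l\in\operatorname{supp}(V)$; since $\gcd(r,\d)=1$ this forces $\d\mid l$, i.e.\ $\operatorname{supp}(V)=\{0\}$ and $P=I$. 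Excluding that degenerate walk yields $|\eta_r|<1$ for all $r\neq0$, hence ergodicity, whenever $\d$ is prime and larger than $2$. For $\d=2$ the only non-principal eigenvalue is $\eta_1=v_0-v_1$; $|\eta_1|=1$ forces $v_0=1$ (the trivial walk, ruled out by non-triviality) or $v_1=1$ (the $2$-periodic walk $0\leftrightarrow1$, ruled out by $v[0]>0$), so $|\eta_1|<1$ and the walk is ergodic.

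The main obstacle is not the number theory, which is elementary, but the careful equality analysis in the triangle inequality $\big|\sum_l v_l\theta_r^l\big|\le1$ together with the bookkeeping for the degenerate case: one must notice that the identity chain is reversible but non-ergodic for every $\d$, so the statement implicitly presumes the walk actually moves, and that the hypothesis $v[0]>0$ in the $\d=2$ case is there to kill the genuinely $2$-periodic walk $0\leftrightarrow1$ rather than the trivial one. A secondary point needing one line is the equivalence between vanishing of all non-principal eigenvalue contributions in (\ref{spectral:00}) in the limit and ergodicity, which is the standard regularity criterion for finite Markov chains.
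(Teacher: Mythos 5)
Your proof is correct. It takes a somewhat different route from the paper's: the paper verifies aperiodicity and irreducibility separately --- aperiodicity by showing that a real eigenvalue can equal $-1$ only if $2Vr/\d$ is almost surely an odd integer (impossible for odd prime $\d$), and irreducibility by a divisibility argument on the support $S(V)$ of $V$ --- and then invokes the standard implication that irreducible plus aperiodic gives ergodic. You instead fold both conditions into the single spectral statement $|\eta_r|<1$ for all $r\neq 0$, obtained from the equality case of the triangle inequality applied to the convex combination $\eta_r=\sum_l v_l\theta_r^l$, and conclude via $P^t\to\frac{1}{\d}J>0$. The underlying number theory is the same in both arguments: your $\eta_r=-1$ analysis mirrors the paper's aperiodicity step, and your $\eta_r=1$ analysis replaces the paper's irreducibility step. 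Your version is arguably a little more self-contained: the paper's assertion that the walk ``is reducible if and only if all of the elements of $S(V)$ divide $\d$'' is loose as a biconditional (what matters is whether $S(V)$ generates $\mathbb{Z}_\d$), though harmless when $\d$ is prime, whereas your exclusion of $\eta_r=1$ via $\gcd(r,\d)=1$ sidesteps that. Both proofs must, and do, exclude the degenerate identity walk $v[0]=1$, and both use the hypothesis $v[0]>0$ only in the $\d=2$ case to kill the $2$-periodic walk.
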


\begin{proof} Exclude the non-trivial case when $v[0]=1,\ v[j]=0,\ j=1,\ldots d-1$.\\
 \emph{Aperiodicity.} Since the eigenvalues are real and we
  have \eqref{symjump:00}, the random walk is periodic if
  and only if
  $\rho_r=\mathbb{E}[\cos(2\pi Vr/q)]=-1$, almost surely
  for some $r\in\{1,\ldots,q-1\}$. This condition is
  equivalent to $2Vr/q$ being an odd integer almost surely.
  Suppose $q=2$. Then, $r=1$ and $V=1$, almost surely. Otherwise,
  $Vr/q$ never becomes an integer if $V>0$ because $q$
  is a prime. 
  Therefore, the random walk is
  aperiodic if $q$ is prime and larger than 2, or
  $q=2$ with $v_0>0$.\\
  \emph{Irreducibility.} Let the support of $V$ be $S(V)$.
  Suppose $S(V)\subsetneq\mathcal{V}_{q}$. The random
  walk is irreducible otherwise. Then, the random walk
  is reducible if and only if all of the elements of
  $S(V)$ divide $q$. If $q$ is a prime, no element
  of $V$ divides $q$. Therefore, if $q$ is a prime
  the random walk is irreducible.\\
  \emph{Ergodicity.} Because of the above arguments on
  aperiodicity and irreducibility, the assertion
  follows.
\end{proof}  

%

\label{ctstorussection}
 \subsection{A process on the torus with continuous state space}\label{ctsstate}
 To obtain a continuous state space limit on the torus let
$\a = a/q, \b = b/q, \V = V^{(q)}/q$ and keep them fixed as $q\to \infty$. Then we have a weak limit to a process $(X_t)_{t\in \mathbb{N}}$
on a torus (circle in one dimension). An analogy with the discrete circulant matrix process is that instead of a distribution $V$ on ${\cal V}_q$ there is a continuous random variable $\V$ on $[0,1]$. 
We consider the torus group
 $\mathbb{T}=\mathbb{R}/\mathbb{Z}$, which
 is identified with the unit circle
 $S^1=\{|z|\in\mathbb{C}:|z|=1\}$ by the bijection
 $\mathbb{T}\to S^1: \v \mod 1\mapsto e^{2\pi i \v}$.
 A transition from $\a\in [0,1]$ is made to $\b=\a+\V, \mod 1$  taken as a position in $\mathbb{T}$.
Denote the process by $(\B)_{t\in \mathbb{N}}$.
An eigenvector/eigenvalue equation is
\begin{equation}
\mathbb{E}\big [e^{2\pi i r \B_{t+1}}\mid \B_t=\a\big ]
 = \mathbb{E}\big [e^{2\pi i r(\a+\V)}\big ]
= e^{2\pi i r\a}\cdot \mathbb{E}\big [e^{2\pi i r\V}\big ]
.
\label{ctseig:00} 
\end{equation}

An analogue of Proposition \ref{pcirc:0} is the following.
\begin{Proposition}\label{pcirc:1}
Let $(B_t)_{t\in \mathbb{N}}$ be a homogeneous stochastic process on $\mathbb{T}$ with eigenvectors $(e^{2\pi i r B_t})_{r\in \mathbb{N}}$ and a uniform stationary distribution on $[0,1]$. Then the class of such processes is characterized by having eigenvalues
\begin{equation}
\rho_r = \mathbb{E}\big [e^{2r\pi i\V}],
\label{propcts:000}
\end{equation}
where $\V$ is a random variable on $[0,1]$, and for $t\in \mathbb{Z}_+$, $r \in \mathbb{N}$,
\begin{equation*}
\mathbb{E}\big [e^{2\pi i r \B_{t}}\mid \B_{t-1}=\a\big ]
= \rho_re^{2\pi i r\a}
.
\end{equation*}

\end{Proposition}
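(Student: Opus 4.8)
The plan is to mirror the proof of Proposition \ref{pcirc:0}, replacing the finite Fourier transform on ${\cal V}_\d$ by the Fourier series on the torus $\mathbb{T}=\mathbb{R}/\mathbb{Z}$, with the roots of unity $(\theta_r)$ replaced by the characters $\a\mapsto e^{2\pi i r\a}$. For the \emph{sufficiency} direction, start with an arbitrary random variable $\V$ on $[0,1]$, take i.i.d.\ copies $(\V_t)_{t\ge 1}$ of $\V$, and set $\B_t=\B_{t-1}+\V_t$ in $\mathbb{T}$. This is homogeneous, and the computation \eqref{ctseig:00} shows directly that $\mathbb{E}[e^{2\pi i r\B_t}\mid \B_{t-1}=\a]=e^{2\pi i r\a}\,\mathbb{E}[e^{2\pi i r\V}]$, so $(e^{2\pi i r\B_t})$ is an eigenvector with eigenvalue $\rho_r=\mathbb{E}[e^{2\pi i r\V}]$, which is \eqref{propcts:000}. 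The uniform law is stationary because it is the Haar measure on $\mathbb{T}$: if $\B_{t-1}$ is uniform and independent of $\V_t$, then $\B_{t-1}+\V_t$ in $\mathbb{T}$ is again uniform. Hence every process of the claimed form is realised.

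For \emph{necessity}, let $(\B_t)$ be a homogeneous process with the stated eigenvectors and uniform stationary law. Because the state space $[0,1]$ is Polish, there is a regular version $\kappa(\a,\cdot)$ of the conditional law of $\B_t$ given $\B_{t-1}=\a$, and by homogeneity it does not depend on $t$. Define $\V$ to be a random variable on $[0,1]$ with law $\kappa(0,\cdot)$, i.e.\ $\V\stackrel{d}{=}(\B_1\mid \B_0=0)$. Setting $\a=0$ in the eigenvector/eigenvalue equation gives $\rho_r=\mathbb{E}[e^{2\pi i r\B_1}\mid \B_0=0]=\mathbb{E}[e^{2\pi i r\V}]$, establishing \eqref{propcts:000}; and the eigenvector equation for general $\a$ is precisely the asserted identity $\mathbb{E}[e^{2\pi i r\B_t}\mid \B_{t-1}=\a]=\rho_r e^{2\pi i r\a}$.

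To confirm that this description loses nothing, observe that $\rho_r e^{2\pi i r\a}=\mathbb{E}[e^{2\pi i r(\a+\V)}]$, so the Fourier coefficients of $\kappa(\a,\cdot)$ agree with those of the law of $\a+\V$ in $\mathbb{T}$ for every $r$; the coefficients at negative indices are the complex conjugates since $\kappa(\a,\cdot)$ is a real (indeed positive) measure, so the nonnegative indices $r\in\mathbb{N}$ already determine the whole sequence. By the uniqueness theorem for Fourier series of finite measures on $\mathbb{T}$, $\kappa(\a,\cdot)$ equals the law of $\a+\V$ in $\mathbb{T}$, so the process is exactly the increment random walk driven by $\V$, and the two directions match up.

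The only step that is not purely formal is this last appeal to Fourier uniqueness on $\mathbb{T}$, needed to upgrade "equality of all Fourier coefficients" to "equality of the transition kernels"; the remaining ingredients (existence of the regular conditional law on a Polish space, translation invariance of Haar measure, closure of the kernel under taking real/conjugate parts) are standard. A minor point to keep in view is the mismatch between the index set $r\in\mathbb{N}$ in the statement and the full $r\in\mathbb{Z}$ needed for Fourier inversion, which is harmless because real-valuedness forces the $-r$ coefficients.
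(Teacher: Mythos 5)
Your proof is correct and follows essentially the same route as the paper: sufficiency via the computation in \eqref{ctseig:00}, and necessity by setting $\a=0$ and identifying $\V$ with the law of $\B_1\mid\B_0=0$. The additional Fourier-uniqueness step, upgrading equality of eigenvalues to identification of the transition kernel as the increment walk $\B_{t+1}=\B_t+\V$ on $\mathbb{T}$, is content the paper defers to the corollary immediately following the proposition, so including it does no harm and in fact makes the characterization claim more complete.
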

\begin{proof}
The sufficiency is clear from (\ref{ctseig:00}).
For the necessity suppose that
\begin{equation}
\mathbb{E}\big [e^{2\pi i r \B_{1}}\mid \B_0=\a\big ]
= \rho_re^{2\pi i r\a}
.
\label{ctseig:000} 
\end{equation}
Set $\a=0$ in (\ref{ctseig:000}) to obtain
\[
\rho_r = \mathbb{E}\big [e^{2\pi i r \B_{1}}\mid \B_0=0\big ].
\]
Then $\V$ is identified as having the distribution of $B_1\mid B_0=0$.
\end{proof}
\begin{Example}\label{Example:8}
If $\V$ is uniform on $[0,1]$ then $(\B_t)$, with a continuous state space, mixes in one step  because $\rho_r = \delta_{r,0}$, similar to in
Example \ref{Example:3}.
\end{Example}

 
There is a question as to whether $({\cal B}_t)$ is recurrent or transient.
An example shows that periodic behaviour can occur.
\begin{Example}
Suppose that $\V=1/3$ with probability 1. Let $\a\in [0,1]$. The state space is continuous in this example.
Since $\V$ is constant 
${\cal B}_t$ is the remainder in $\a+t/3$. If $t$ is a multiple of $3$ then ${\cal B}_t=\a$
and the process is recurrent and periodic. Suppose that $\V$ is a fixed irrational number in $[0,1]$ with probability 1.
Then $({\cal B}_t)$ is not recurrent.
\end{Example}
%

We consider the case when an eigenfunction expansion of the transition density exists. Note that a probability density $f$ on $\mathbb{T}$ is demanded so that the density is defined on $\v \mod 1$ $\in\mathbb{T}$, or equivalently, $f$ is a periodic function:
\[
  f(\v)=f(\v+n), \quad \forall n\in\mathbb{Z}, ~ \forall \v \in [0,1].
\]
Let $f_t\big (\b\mid \a\big )$ be the transition density of $\B_{t+1}\mid B_t=\a$.
Taking the limit $q\to\infty$ in \eqref{spectral:01}:
\begin{equation}
P_{ab}
  =\mathbb{E}\Big[\frac{1}{q}\sum_{r=0}^{q-1}\theta_1^{r(V+a-b)}\Big]
  =\mathbb{E}\Big[\frac{1}{q}\sum_{r=-\lfloor q/2\rfloor}^{q-1-\lfloor q/2\rfloor}\theta_1^{r(V+a-b)}\Big],
\label{seq:cts}  
\end{equation}
a density expansion is
\begin{align}
f_t\big (\b\mid \a\big )
&=  \sum_{r={-\infty}}^{\infty}
\mathbb{E}\big [e^{2\pi i\ \V r}\big ]^t
e^{2\pi i\ \a r}e^{-2\pi i\ \b r},\ \a,\b \in \mathbb{T},
\label{cts:20}
\end{align}
which is real. 
If $q$ is odd (\ref{cts:20}) follows immediately by \eqref{seq:cts}.
If $q$ is even, \eqref{seq:cts} is
\[
  \frac{1}{q}\theta_1^{-q(V+a-b)/2}+
  \mathbb{E}\Big[\frac{1}{q}\sum_{r=-q/2+1}^{q/2-1}
    \theta_1^{r(V+a-b)}\Big]
\]
and the first term vanishes in the limit.
\color{black}
In this example we can show that the series expansion (\ref{cts:20}) converges for $t\ge 1$ under an assumption on the smoothness of the density.
For simplicity, set $t=1$ in (\ref{cts:20}). Let
\[
\hat{g}(r)=\mathbb{E}\Big[e^{2\pi i\V r}\Big ]
=\int_0^1 f_1(\v|0)e^{2\pi ir\v}d\v
=\mathbb{E}\Big[e^{2\pi ir\B_1}|\B_0=0\Big].
\]
and $\V$ be  continuous with density $g(\v)$, $\v \mod 1 \in\mathbb{T}$, which has bounded second derivatives. Because of periodicity $g(0)=g(1)$ and $g^\prime(0)= g^\prime(1)$. Integrating by parts twice
\[
  \int_0^1 g^{\prime\prime}(\v)e^{2\pi i\v r} d\v = 
  -(2\pi r)^2 \int_0^1 g(\v)e^{2\pi i\v r} d\v=
  -(2\pi r)^2 \hat{g}(r).
\]
Then
\[
  \Big|\int_0^1 g^{\prime\prime}(\v)e^{2\pi i\v r} d\v\Big| 
  \le \int_0^1 |g^{\prime\prime}(\v)|d\v\le C, \quad
  C=\sup_{\v\in\mathbb{T}} |g''(\v)|,  
\]
giving a bound
\[
  |\hat{g}(r)|\le \frac{C}{(2\pi r)^2} \quad \text{for} \quad r\neq 0.
\]
Therefore the series converges in $\ell^1$, because it is bounded by
\[
  1+\frac{2C}{(2\pi)^2}\sum_{r=1}^\infty\frac{1}{r^2}<\infty.
\]
Since the transform of the series (\ref{cts:20}) is proper when $t=1$, the series expansion of  $f_1(\b\mid \a)$ is well defined.
We can see that the uniform distribution is the stationary distribution,
since
\begin{align*}
  \int_0^1 1\cdot f_1(\b|\a)d\a&=
  \sum_{r=-\infty}^\infty\mathbb{E}\Big[ e^{2\pi i\V r}\Big]
  e^{-2\pi i \b r}\int_0^1 e^{2\pi i\a r}d\a\\
  &=\sum_{r=-\infty}^\infty\mathbb{E}\Big[e^{2\pi i\V r}\Big]
  e^{-2\pi i \b r}\delta_{r,0}=1.
\end{align*}

\begin{Example}
An example of a circular distribution is the von-Mises distribution with the density
\[
  g(\v)=\frac{e^{k\cos(2\pi\v)}}{I_0(k)}, \quad  \v\!\!\! \mod 1 \in \mathbb{T},
\]
and then
\[
\hat{g}(r)=\mathbb{E}[e^{2\pi i\V r}]=\frac{I_r(k)}{I_0(k)}, \quad
r\in\mathbb{Z},
\]
where $I_r(k)=I_{-r}(k)$ is the modified Bessel function of the first kind of order $r$, and the argument $k$ controls concentration of the density at $0$. If $k=0$ the density reduces to the uniform density. We know $\hat{g}(0)=1$ and $\hat{g}(r)$ are positive and monotonically decreasing with the increase of $|r|$. Therefore, we can see the convergence $f_t(\b|\a)\to 1$ as $t\to\infty$. The stationary distribution is the uniform distribution.
\end{Example}

\begin{Remark}
  Even if the transition density does not exist, the formal series expansion in \eqref{cts:20} has a Physics interpretation. If $\V=v$, a.s., we have
  \[
  \sum_{r=-\infty}^{\infty}
  e^{2\pi ir(vt+\a-\b)},\ \a,\b \in {\mathbb{T}.}
  \]
  This series does not converge. The interpretation is that 
  there is a closed vibrating string,
  with waves propagating in the positive direction. For each mode $r$, $2\pi r$ {are} the wave numbers, which are $2\pi$ times the inverse of wave length, {and $rv$ is the frequency.}
\end{Remark}
\medskip

There is an extension of Proposition \ref{pcirc:1}  characterizing continuous time processes where eigenvalues have the form (\ref{cts:08}).
The proof in continuous time is essentially due to \cite{B1949} in a context for processes with different eigenfunctions than ours. 
Bochner's proof is also used in \cite{G2009} for characterisations of eigenvalues in a class of stochastic processes.  
\begin{Proposition}
Let $(X_\tau)_{\tau \in \mathbb{R}_+}$ on $\mathbb{T}$  be a Poisson embedding to continuous time with $X_\tau = \B_{N_\tau}$ where $N_\tau$ is a Poisson process of rate $\lambda$.  An eigenvector/eigenvalue equation for $(X_\tau)$ is
\begin{align*}
\mathbb{E}\big [e^{2\pi i r X_\tau}\mid X_0=x_0\big ]
&= 
\exp \big \{
\lambda\tau \mathbb{E}\big [e^{2\pi ir \V}-1\big ] \}
\cdot e^{2\pi i r x_0},
\end{align*}
so the eigenvalues are $\exp \big \{
\lambda\tau \mathbb{E}\big [e^{2\pi ir \V}-1\big ]\big \}$.\\ There is a more general form of the eigenvalues.\\
If the probability measure of
$\V$ is $\varphi_\lambda$ then suppose that $\phi$ is the weak limit
$\lambda \varphi_\lambda(d\v) \to \phi(d\v)/\v$, $\v >0$.
 Let $(Z_\tau)_{\tau \geq 0}$ be a subordinator on $[0,1]$ with characteristic function
\begin{equation*}
\mathbb{E}\big [e^{iZ_\tau\vartheta }\big ] = 
\exp \Big \{\tau \int_{[0,1]}\frac{e^{i\v\vartheta } -1}{\v}\phi(d\v)
\Big \}.
\end{equation*}
Eigenvalues in the subordinated process $(X_{Z_\tau})_{\tau \geq 0}$ are
\begin{equation}
\mathbb{E}\big [e^{2\pi irZ_\tau}\big ] = 
\exp \Big \{\tau \int_{[0,1]}\frac{e^{2\pi ri\v } -1}{\v}\phi(d\v)
\Big \}.
\label{cts:08}
\end{equation}
\end{Proposition}
\begin{proof}
In the Poisson embedding
\begin{align*}
\mathbb{E}\big [e^{2\pi i r X_\tau}\mid X_0=x_0\big ]
&= \sum_{k=0}^\infty e^{-\lambda \tau} \frac{\big (\lambda \tau\big )^k}{k!}
\mathbb{E}\big [e^{2\pi i r\B_k}\mid \B_0=x_0\big ]
\nonumber \\
&=\sum_{k=0}^\infty e^{-\lambda \tau} \frac{\big (\lambda \tau\big )^k}{k!}
\mathbb{E}\big [e^{2\pi i r \V }\big ]^k\cdot e^{2\pi i r x_0}
\nonumber\\
&= 
\exp \big \{
\lambda\tau \mathbb{E}\big [e^{2\pi ir \V}-1\big ]\big \}
\cdot e^{2\pi i r x_0}.
\end{align*}
The subordinated process limit is straightforward.
\end{proof}
\begin{Example}
Take $\phi(\cdot)$ to be a Beta$(2-\gamma,\gamma)$ measure, where $\gamma \in (0,1)$.
\begin{align*}
\mathbb{E}\big [e^{iZ_\tau\vartheta }\big ] &= 
\exp \Big \{\tau \big (\Gamma(2-\gamma)\Gamma(\gamma)\big )^{-1}
\int_{[0,1]}\big (e^{i\v\vartheta } -1\big )\v^{-\gamma}(1-v)^{\gamma-1} d\v
\Big \}\\
&= \exp \Big \{\tau (1-\gamma)^{-1}\sum_{k=1}^\infty \frac{1}{k!}(i\vartheta)^k
\frac{(1-\gamma)_{(k)}}{1_{(k)}}\Big \}\\
&= \exp \Big \{\tau (1-\gamma)^{-1}\big (\phantom{\cdot}_1F_1(1-\gamma,1;i\vartheta) - 1\big )\Big \}{\color{green}.}
\end{align*}
\end{Example}
\medskip

\section{A Markov chain on ${\cal V}_{q,d}$}\label{MCVN}
Assume that transitions of entries of $X_t$ to $X_{t+1}$ are made according transition matrices $P(V[1]),\ldots, P(V[d])$ where $V=(V[1],\ldots V[d])$. The entries of $V$ are random on ${\cal V}_q$ and not necessarily independent. In the homogeneous random walk (\ref{rweqn:000}) a generic increment $Z\equiv V$ from (\ref{PVP:00}).
The probability of a transition $X_t=x$ to $X_{t+1}=y$ is then 
\begin{equation*}
P_{xy}=\mathbb{E}\big [\prod_{k=1}^d P_{x[k] y[k]}(V[k])\big ]
\end{equation*}
where expectation is over $V$. We use the notation (\ref{PV:00}).
The eigenvectors of the transition matrix $(P_{xy})$  are
\begin{equation}
\prod_{k=1}^d\theta_{r[k]}^{x[k]}=\theta_1^{x\dt r},
\label{ev:001}
\end{equation}
where $r=(r[1],\ldots, r[d])$, $r[k]\in {\cal V}_q$ and a complex conjugate form. The eigenvalues are 
\begin{equation}
\rho_r =   \mathbb{E}\big [\prod_{k=1}^d\theta_{r[k]}^{V[k]}\big ]
= \mathbb{E}\big [\theta_1^{V\dt r}\big ].
\label{eigenval:123}
\end{equation}
A Lancaster characterization of $P_{xy}^\circ$ with these eigenvectors is almost identical to the one-dimensional case.
It shows that all transition probability matrices with eigenvectors
(\ref{ev:001}) must have eigenvalues of the form (\ref{eigenval:123}).
\begin{Proposition}\label{pcirc}
Let $x,y \in  {\cal V}_{q,d}$. Then $P^\circ$ with entries 
\begin{align}
P^\circ_{xy} &= \frac{1}{q^d}
\sum_{r\in {\cal V}_{q,d}}\rho_r\prod_{k=1}^d
\theta_{r[k]}^{x[k]}\theta_{r[k]}^{-y[k]}
\label{spectral:02a}\\
&= \frac{1}{q^d}\sum_{r\in {\cal V}_{q,d}}\rho_r
\theta_1^{(x-y)\dt r}
\nonumber
\end{align}
is non-negative and therefore a transition probability matrix if and only if
$ \rho_r =  \mathbb{E}\big [\prod_{k=1}^d\theta_{r[k]}^{V[k]}\big ] = \mathbb{E}\big [\theta_1^{V\dt r}\big ]$ for a random variable $V$ on ${\cal V}_{q,d}$.
If $r$ has all entries zero then $\rho_r=1$.
\end{Proposition}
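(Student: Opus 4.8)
The plan is to mirror the one-dimensional argument in Proposition \ref{pcirc:0} almost verbatim, since the $\N$-dimensional Fourier structure is just a tensor product of $\d$-cycles. First I would dispense with sufficiency: given a random variable $V$ on ${\cal V}_{\d,\N}$, set $\rho_r = \mathbb{E}\big[\theta_1^{V\dt r}\big]$. Then the proposed matrix is exactly $\mathbb{E}\big[\prod_{k=1}^\N P_{x[k]y[k]}(V[k])\big]$, a mixture (over the law of $V$) of the deterministic shift matrices described in (\ref{PVP:00}), hence entrywise non-negative with row sums $1$. That $\rho_r=1$ when $r$ has all entries zero is immediate since $\theta_1^{0}=1$.

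For necessity, suppose $P^\circ$ given by (\ref{spectral:02a}) is a transition probability matrix. Let $Y$ given $X=x$ be distributed according to the row $(P^\circ_{xy})_{y\in{\cal V}_{\d,\N}}$. Applying the eigenvector relation componentwise, $\mathbb{E}\big[\theta_1^{Y\dt r}\mid X=x\big] = \rho_r\,\theta_1^{x\dt r}$; this is just the statement that $(\theta_1^{x\dt r})$ is an eigenvector with eigenvalue $\rho_r$, which one reads off by multiplying (\ref{spectral:02a}) by $\theta_1^{x\dt r}$ and summing over $x$ using the orthogonality of the characters $(\theta_1^{x\dt r})_x$ on the uniform distribution on ${\cal V}_{\d,\N}$ (the product of the one-dimensional orthogonality relations recalled in Section \ref{Circulant}). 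Now specialize to $x=0$ (the all-zeros vector): then $\rho_r = \mathbb{E}\big[\theta_1^{Y\dt r}\mid X=0\big]$ for every $r\in{\cal V}_{\d,\N}$, so taking $V$ to have the conditional law of $Y\mid X=0$ gives $\rho_r=\mathbb{E}\big[\theta_1^{V\dt r}\big]=\mathbb{E}\big[\prod_{k=1}^\N\theta_{r[k]}^{V[k]}\big]$, as required. The inverse Fourier transform identity $\mathbb{P}(V=v)=\d^{-\N}\sum_{r}\rho_r\,\theta_1^{-v\dt r}$ also follows, exactly as (\ref{inv:000}) did in one dimension.

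The only point needing a little care — and the mild analogue of the "main obstacle" — is justifying that the conditional expectations $\mathbb{E}\big[\theta_1^{Y\dt r}\mid X=x\big]$ do indeed satisfy the eigenvector relation for \emph{every} $r$, i.e. that the $\d^\N$ numbers $\rho_r$ extracted this way are consistent with the single row distribution at $x=0$; but this is automatic because the characters $\{\theta_1^{x\dt r}\}_{r\in{\cal V}_{\d,\N}}$ form an orthogonal basis of $\mathbb{C}^{{\cal V}_{\d,\N}}$, so the expansion coefficients of any row of $P^\circ$ are uniquely determined and agree with the $\rho_r$ by hypothesis. There is no genuine difficulty beyond bookkeeping: the whole argument is the one-dimensional proof applied coordinatewise and then assembled via the tensor-product orthogonality relation.
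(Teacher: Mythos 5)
Your proof is correct and follows exactly the route the paper intends: the paper supplies only the one-dimensional argument (Proposition \ref{pcirc:0}) and remarks that the $\N$-dimensional characterization is ``almost identical,'' and you have reproduced that argument coordinatewise --- sufficiency via the mixture over $V$ of the deterministic shift matrices (\ref{PVP:00}), necessity via the eigenvector relation evaluated at $x=0$ so that $V$ is the law of $Y\mid X=0$. The only nit: to obtain $\mathbb{E}\big[\theta_1^{Y\dt r}\mid X=x\big]=\rho_r\theta_1^{x\dt r}$ one multiplies $P^\circ_{xy}$ by $\theta_1^{y\dt r}$ and sums over $y$ (using character orthogonality in the index $s$ of the expansion), not over $x$; this is a slip of variable names, not of substance.
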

If $q=2$ Proposition \ref{pcirc} says that $(P^\circ_{xy})$ is a transition probability matrix with given eigenvectors if and only if $\rho_r=\mathbb{E}\big [(-1)^{V \dt r} \big ]$ and then
after simplification
\begin{equation*}
P_{xy}^\circ= \frac{1}{2^d}
\sum_{r\in {\cal V}_{q,d}}
\mathbb{E}\big [(-1)^{V \dt r}\big ]
(-1)^{(x+y)\dt r}.
\end{equation*}
This is effectively a special case of results in \cite{CG2021} written in a different notation. 

An analogue of Proposition \ref{pcirc:1} on a $d$-dimensional torus is the following.
\begin{Proposition} \label{torusprop} Let $(\B_t)_{t\in \mathbb{N}}$ be a homogeneous in time stochastic process on the torus group $\mathbb{T}^d$  with a uniform stationary distribution and eigenvectors $(e^{2\pi i  B_t\dt r})_{r\in \mathbb{N}^d}$. Then the class of such processes is characterized by having eigenvalues
\begin{equation*}
\rho_r = \mathbb{E}\big [e^{2\pi i\V\dt r}].
\end{equation*}
where $\V$ is a random variable on $[0,1]^d$.
\end{Proposition}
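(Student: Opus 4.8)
The plan is to mirror exactly the argument used for Proposition~\ref{pcirc:1}, since Proposition~\ref{torusprop} is its multidimensional analogue; the only new ingredient is bookkeeping over the $\N$ torus coordinates. First I would establish \emph{sufficiency}: given a random variable $\V$ on $[0,1]^\N$, define the process by $\B_{t+1}=\B_t+\V \bmod 1$ componentwise (with $\V$ drawn independently at each step, independent of $\B_t$). Then the conditional characteristic evaluation factors as
\[
\mathbb{E}\big[e^{2\pi i \B_{t+1}\dt r}\mid \B_t=\a\big]
= \mathbb{E}\big[e^{2\pi i (\a+\V)\dt r}\big]
= e^{2\pi i \a\dt r}\,\mathbb{E}\big[e^{2\pi i \V\dt r}\big],
\]
so $(e^{2\pi i B_t\dt r})_{r\in \mathbb{N}^\N}$ are eigenvectors with eigenvalues $\rho_r=\mathbb{E}[e^{2\pi i\V\dt r}]$, and the uniform distribution on $[0,1]^\N$ is stationary because $\int_{[0,1]^\N} e^{2\pi i\a\dt r}\,d\a=\delta_{r,0}$, killing every nonconstant eigenvector. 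This is the routine direction and essentially copies the one-dimensional proof coordinatewise.

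The substance is \emph{necessity}: starting only from the hypothesis that $(\B_t)$ is homogeneous, has uniform stationary law, and admits the $(e^{2\pi i B_t\dt r})$ as eigenvectors, I must exhibit a random variable $\V$ realizing the claimed eigenvalues. Following the one-dimensional template, the eigenvector property means there exist scalars $\rho_r$ with
\[
\mathbb{E}\big[e^{2\pi i \B_1\dt r}\mid \B_0=\a\big] = \rho_r\, e^{2\pi i \a\dt r}
\quad\text{for all }\a\in[0,1]^\N,\ r\in\mathbb{N}^\N.
\]
Setting $\a=0$ gives $\rho_r=\mathbb{E}[e^{2\pi i\B_1\dt r}\mid \B_0=0]$, so I simply \emph{define} $\V$ to be a random variable with the law of $\B_1\mid \B_0=0$ on the torus $[0,1]^\N$; then $\rho_r=\mathbb{E}[e^{2\pi i\V\dt r}]$ by construction, which is exactly \eqref{propcts:000} lifted to $\N$ dimensions. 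To see this genuinely characterizes the one-step transition law (and hence, by homogeneity and the Markov property, the whole process), I would note that the collection $\{e^{2\pi i\a\dt r}\}_{r\in\mathbb{Z}^\N}$ is a complete orthonormal system for $L^2(\mathbb{T}^\N)$, so the Fourier coefficients $\mathbb{E}[e^{2\pi i\B_1\dt r}\mid \B_0=\a]=\rho_r e^{2\pi i\a\dt r}$ determine the conditional law of $\B_1$ given $\B_0=\a$ as the distribution of $\a+\V \bmod 1$; this also forces the process to be the torus random walk of the Corollary that follows.

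The main obstacle, such as it is, is purely technical: extending $r$ from $\mathbb{N}^\N$ (the index set named in the statement) to all of $\mathbb{Z}^\N$ so that Fourier completeness applies. This is handled by the conjugate-symmetry observation already used in the one-dimensional case --- $\rho_{-r}=\overline{\rho_r}$ and $e^{2\pi i\a\dt(-r)}=\overline{e^{2\pi i\a\dt r}}$ since $\B_1,\a$ are real --- together with $\rho_0=1$ from normalization, so the eigen-relation for $r\in\mathbb{N}^\N$ automatically propagates to the full lattice. Beyond that, one should remark that homogeneity in time is what lets the single-step identity \eqref{ctseig:000} (in its $\N$-dimensional form) upgrade to all $t$, exactly as in Proposition~\ref{pcirc:1}; no measure-theoretic subtlety arises because the torus is compact and the characteristic functions separate probability measures on it.
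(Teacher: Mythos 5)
Your proof is correct and takes essentially the same route as the paper, which proves only the one-dimensional case (Proposition~\ref{pcirc:1}) and states the $\N$-dimensional version as an immediate analogue: sufficiency from the explicit additive construction $\B_{t+1}=\B_t+\V \bmod 1$, and necessity by setting $\a=0$ in the eigenvector relation and identifying $\V$ with the law of $\B_1\mid \B_0=0$. The extra remarks on Fourier completeness and conjugate symmetry go beyond what the paper records but are consistent with it.
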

\subsection{Grouping terms when entries of $V$ are exchangeable}
\label{qexpansion:0}
\begin{Proposition}\label{qexpansion}
Suppose the entries of $V$ are exchangeable and the eigenvalues $(\kappa_l)$ are real.
Write $M_t = (M_{tj})_{j=0}^{q-1}$, where $M_{tj}$ counts the number of entries equal to $j$ in $X_t$.
The transition function for $M_t=m$ to $M_{t+1}=n$ has a spectral expansion
\begin{equation}
\mathbb{P}\big (n\mid m)=
\p(n)
\Bigg \{ 1 + \sum_{l:0<|l| \leq d}\kappa_lh_lQ_l(m;(\theta_k))\overline{Q_l(n;(\theta_k))}\Bigg \},
\label{spectral:001}
\end{equation}
where $\big (Q_l(\cdot ;(\theta_k))\big )$ are  multivariate Krawtchouk orthogonal polynomials on the uniform multinomial distribution
\[
\p(n) = {d\choose n}\frac{1}{q^d}.
\]
The state space of $(m)$ is ${\cal P}_{d,q}$, the set of partitions of $d$ into $q$ parts. \color{black}
$Q_l(m ;(\theta_k))$ is the coefficient of $w_1^{l_1}\cdots w_{q-1}^{l_q -1}$ in the generating function
\begin{equation}
G(m;w,q,d) =
\prod_{j=0}^{q-1}\Big (1 + \sum_{k=1}^{q-1}w_k\theta_k^j\Big )^{m[j]}
= \prod_{c=1}^d\Big (1 + \sum_{k=1}^{q-1}w_k\theta_k^{X[c]}\Big ),
\label{genfn:00}
\end{equation}
where $X$ is uniform  on ${\cal V}_{q,d}$.
$X$ and $m$ are random variables in this generating function.
$X[c]$ is random on the points ${\cal V}_q$  so $X$ is random on
${\cal V}_{q,d}$, independent in different entries $X[c]$. $m[j]$ is the multinomial count in $d$ trials of how many of $X[c]$ values over different $c$ are equal to 
$j \in {\cal V}_q$. This is the way the functions $Q_l(m ;(\theta_k))$ are set up to be orthogonal on the multinomial. 
In (\ref{genfn:00}) for $j\in {\cal V}_q$,
$m[j]= |\{c\in [d]: X[c]=j\}|$ let
\begin{equation*}
h_l^{-1} = \mathbb{E}\big [Q_l(M;(\theta_k))\overline{Q_l(M;(\theta_k))}\big ]
= \frac{d!}{(d-|l|)!l_1!\cdots l_{q-1}!},
\end{equation*}
where expectation is taken over $M$ having a uniform multinomial distribution
and $|l| = l_1+\cdots +l_{q-1}$.  $Q_l(m ;(\theta_k))$ is also the coefficient of ${d\choose m}s_0^{m[0]}\cdots s_{q-1}^{m[q-1]}$ in
\begin{equation}
h_l^{-1}\big (\sum_{j=0}^{q-1}s_j\big )^{d-|l|}
\prod_{k=1}^{q-1}\big (\sum_{j=0}^{q-1}s_j\theta_k^j\big )^{l_k},
\label{genfn:02}
\end{equation}
a dual generating function. Denote $l^+=(d-|l|,l_1,\ldots,l_{q-1})$ and 
$m^- = (m[1],\ldots, m[d-1])$. There is a duality relationship 
\begin{equation}
h_m^{-1}Q_l(m; (\theta_k)) = h_{l^+}^{-1}Q_{m^-}(l^+;(\theta_j)).
\label{MVKdual:00}
\end{equation} 
The eigenvalues 
$\kappa_l,\ j \in {\cal V}_q$ are a grouping of indices in $\rho_r$.
Let  $A_l = \{r[k],k\in[d]: |\{k:r[k]=j\}| = l_j, j \in {\cal V}_q\}$. Then 
\begin{align}
\kappa_l =  h_l \mathbb{E}\big [\prod_{A_l} \theta_{r[k]}^{V[k]}\big ]
=\mathbb{E}\big [
\theta_1^{S_{l_1}}
\theta_2^{S_{l_2}}
\cdots
\theta_{q-1}^{ S_{l_{q-1}} }\Big ]
\label{eval:26}
\end{align}
where
\begin{align*}
S_{l_1} &= V[1]+\cdots + V[l_1]\\
S_{l_2} &= V[l_1+1]+\cdots + V[l_1+1+l_2]\\
&\cdots\cdots\\
S_{l_{q-1}}&=V[l_1+\cdots + l_{q-2} +q-1] + \cdots +V[l_1+\cdots + l_{q-2} +q-1 + l_{q-1}].
\end{align*}
\end{Proposition}
\begin{proof}
In this exchangeable case (\ref{spectral:02a}) must be identical to (\ref{spectral:001}) from a probabilistic argument, once notation is established, apart from the factor ${d\choose n}$ which arises from unordering entries of $X_{t},X_{t+1}$.
The stationary distribution of $(M_t)$ is $\p(m)$.
 We find the spectral expansion for the joint distribution 
\[
\mathbb{P}\big (M_t = m, M_{t+1} = n \big )=
\p(n)\mathbb{P}\big (n\mid m),
\]
where $m$ and $n$ are the counts of values in  $X_{t},X_{t+1}$.  A computational way of showing (\ref{spectral:001}) is to consider expectation in the generating functions of $X_t,X_{t+1}$.
Let $X_t$ have a multinomial stationary distribution uniform on ${\cal V}_{q,d}$. 
\begin{align*}
&\mathbb{E}\Big [\overline{G(M_t;w,q,d)}{G(M_{t+1};w^\prime,q,d)}\Big ]\nonumber\\
&= \mathbb{E}\Big [\prod_{c=1}^d
\Big (1 + \sum_{k=1}^{q-1}w_k\theta_k^{-X_t[c]}\Big )
\Big (1 + \sum_{k=1}^{q-1}w^\prime_k\theta_k^{X_{t+1}[c]}\Big )
\Big ].
\end{align*}
Now conditional on $V=v$, $X_t[c],X_{t+1}[c]$ are independent and
\begin{align*}
\mathbb{E}\Big [
\Big (1 + \sum_{k=1}^{q-1}w_k\theta_k^{-X_t[c]}\Big )
\Big (1 + \sum_{k=1}^{q-1}w^\prime_k\theta_k^{X_{t+1}[c]}\Big )\mid V=v
\Big ]
= 1 + \sum_{k=1}^{q-1}w_kw^\prime_k\theta_k^{v[c]}.
\end{align*}
because 
\begin{align*}
\mathbb{E}\Big [\theta_k^{-X_t[c]}\theta_{k^\prime}^{X_{t+1}[c]}\mid V=v \Big ]
= \sum_{a,b=0}^{d-1}\frac{1}{q}P_{a,b}(v[c])\theta_k^{-a}\theta_{k^\prime}^{b}
= \delta_{k,k^\prime}\theta_k^{v[c]}.
\end{align*}
Taking the unconditional expectation over $V$ we have shown that
\begin{align*}
&\mathbb{E}\Big [\overline{G(M_t;w,q,d)}{G(M_{t+1};w^\prime,q,d)}\Big ]\nonumber\\
&= 1 + \sum_{l: |l|\leq q}h_l^{-1}\mathbb{E}\Big [\theta_1^{S_{l_1}}\theta_2^{S_{l_2}}\cdots \theta_{q-1}^{S_{l_{q-1}}}\Big ]
(w_1w_1^\prime)^{l_1}\cdots (w_{q-1}w_{q-1}^\prime)^{l_{q-1}}.
\end{align*}
Equating coefficients of powers of elements of $w,w^\prime$
\begin{equation}
\mathbb{E}\Big [\overline{Q_l(M_t;(\theta_k))}{Q_{l^\prime}(M_{t+1};(\theta_k))}\Big ]
= \delta_{l,l^\prime}h_l^{-1}\mathbb{E}\Big [\theta_1^{S_{l_1}}\theta_2^{S_{l_2}}\cdots \theta_{q-1}^{S_{l_{q-1}}}\Big ].
\label{eigenv:100}
\end{equation}
Denote the eigenvalues as $\kappa_l = \mathbb{E}\Big [\theta_1^{S_{l_1}}\theta_2^{S_{l_2}}\cdots \theta_{q-1}^{S_{l_{q-1}}}\Big ]$. \\[0.2cm]
Since the elements of $V$ are exchangeable the expectation can be arranged in consecutive blocks as in (\ref{eval:26}), this is the reason for the combinatorial term $h_l^{-1}$ in  (\ref{eigenv:100}). We have shown that the joint distribution of $M_t,M_{t+1}$ has the correct expansion, and therefore the conditional distribution (\ref{spectral:001}) holds.
 To obtain (\ref{genfn:02}), which is not needed for the proof of (\ref{spectral:001}) but is needed in what follows, consider
\begin{align}
&\sum_{m: |m| = d}{d\choose m}
\prod_{j=0}^{q-1}s_j^{m[j]}\Big (1 + \sum_{k=1}^{q-1}w_k\theta_k^j\Big )^{m[j]}\nonumber\\
&=\Big( \sum_{j=0}^{q-1}\big (1 + \sum_{k=1}^{q-1}w_k\theta_k^j\big )s_j\Big )^d\nonumber\\
&= \Big (\sum_{j=0}^{q-1}s_j + \sum_{k=1}^{q-1}\Big (\sum_{j=1}^{q-1}s_j\theta_k^j\Big )w_k\Big )^{d}.
\label{dual:002}
\end{align}
The coefficient of $\prod_{k=1}^{d-1}w_k^{l[k]}$ in (\ref{dual:002}) is (\ref{genfn:02}) showing the dual generating function is correct.
\end{proof}
General multivariate Krawtchouk polynomials $(Q_l)$, orthogonal on the multinomial were first introduced in \citet{G1971}. Probabilistic aspects of them are reviewed and developed in \citet{DG2014}.
\begin{Example}
Take $q=3, d=3$. $l=(1,2)$.\\
Considering a partition $A_l=\{r_k:|\{k:r_k=1\}|=1, |\{k:r_k=2\}|=2\}$ in (\ref{eval:26})
\begin{align*}
\kappa_l &= \mathbb{E}\big [
\theta_1^{V[1]}\theta_2^{V[2]+V[3]}\big ]
\\
&= h_l\mathbb{E}\big [Q_l(V;(\theta_k))\big ]\\
&=\frac{1!2!}{3!}\mathbb{E}\big [
\theta_1^{V[1]}\theta_2^{V[2]}\theta_2^{V[3]}
+\theta_2^{V[1]}\theta_1^{V[2]}\theta_2^{V[3]}
+\theta_2^{V[1]}\theta_2^{V[2]}\theta_1^{V[3]}
\big ].
\end{align*}
\begin{Remark}
  If $q=2$ the multivariate Krawtchouk polynomials are the Krawtchouk polynomials with notation $\big  (K_l(m[1];d,q)\big)_{l=0}^d$ in  the literature, orthogonal on the Binomial$(m[1];d,1-1/q)$ distribution with the generating function
  \begin{equation}
    \sum_{l=0}^d K_l(m;d,q)w^l=(1+(q-1)w)^{d-m}(1-w)^m.
    \label{genfn:S}
  \end{equation}
On the other hand, the generating function (\ref{genfn:00}) becomes 
\begin{equation*}
G(m;w,2,d) = (1+w[1])^{d-m[1]}(1-w[1])^{m[1]},
\end{equation*}
which shows that
\[
  K_l(m;d,2)=Q_l(m;(1,-1)),
\]
where we use notation $(1,-1)$ for the $2^{\text{th}}$ roots of unity.
\cite{DG2012} used a scaled version 
\[
Q^K_l(m;d,1-1/q) = {d\choose l}^{-1}(q-1)^{-l}
K_l(m;d,q),
\]
where note that $p$ and $q$ in \cite{DG2012} are $1-1/q$ and $1/q$, respectively, in this paper.
Then there is duality with $Q_l^K(m;d,1-1/q) = Q^K_m(l;d,1-1/q)$, the same as (\ref{MVKdual:00}) when $q=2$.
\end{Remark}

\end{Example}
\begin{Corollary}\label{nxycorr:01}
Let $n_{x-y}\in {\cal V}_{q,d}$ be the counts of entries\!\!\! $\mod q$ in $X_{t+1}-X_t$.
That is for $k\in {\cal V}_q$,
\[
n_{x-y}[k] = |\big \{j: X_{t+1}[j]- X_t[j]= k\!\!\!\mod q\big \}|.
\]
Then $n_{x-y}$ has distribution
\begin{equation}
\p(n_{x-y})
\Big\{ 1 + \sum_{l:0<|l| \leq d}\kappa_lQ_l(n_{x-y};(\theta_k))\Big \},
\label{spectral:006}
\end{equation}
which only depends on $x,y$ through $x-y \mod q$.
\end{Corollary}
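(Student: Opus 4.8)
The plan is to observe that, conditionally on $X_t=x$, the increment $X_{t+1}-X_t\bmod\d$ has exactly the distribution of $V$, and then to read off the law of its type counts from the expansion already established in Proposition \ref{qexpansion}.

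First I would note that a transition of the $k$-th coordinate from $x[k]$ to $y[k]$ has probability $P_{x[k]y[k]}(V[k])$, and that, given $V=v$, this equals $1$ precisely when $y[k]=x[k]+v[k]\bmod\d$. Hence the conditional law of $(X_{t+1}-X_t)\bmod\d$ given $X_t=x$ is the law of $V$, for every $x$, and the count vector $n_{x-y}$ therefore has the law of $N_V:=\big(|\{k:V[k]=j\}|\big)_{j=0}^{\d-1}$ on ${\cal P}_{\N,\d}$. In particular this law does not depend on $x$ or $y$ at all, so it depends on them only through $x-y\bmod\d$; it remains to identify the distribution of $N_V$.

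The quickest route is to specialise (\ref{spectral:001}) to the starting state $x=(0,\dots,0)$, whose count vector is $m^0=(\N,0,\dots,0)$: then $X_{t+1}=V$, so $M_{t+1}=N_V$ and $\mathbb P(N_V=n)=\mathbb P(n\mid m^0)$. Evaluating the generating function (\ref{genfn:00}) at $m^0$ gives $G(m^0;w,\d,\N)=\big(1+\sum_{k=1}^{\d-1}w_k\big)^\N$, whose coefficient of $w_1^{l_1}\cdots w_{\d-1}^{l_{\d-1}}$ is the multinomial coefficient $\N!/\big((\N-|l|)!\,l_1!\cdots l_{\d-1}!\big)=h_l^{-1}$; thus $h_lQ_l(m^0;(\theta_k))=1$ for every $l$. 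Substituting into (\ref{spectral:001}) collapses the double sum and yields $\mathbb P(N_V=n)=\p(n)\big\{1+\sum_{0<|l|\le\N}\kappa_l\,\overline{Q_l(n;(\theta_k))}\big\}$; since the left side and $\p(n)$ are real and $(\kappa_l)$ are real, taking complex conjugates replaces $\overline{Q_l(n;(\theta_k))}$ by $Q_l(n;(\theta_k))$ and gives precisely (\ref{spectral:006}). Equivalently, one may expand $n\mapsto\mathbb P(N_V=n)/\p(n)$ directly in the complete orthogonal system $(Q_l)$ on $\p$ and evaluate the coefficients from the moments $\mathbb E\big[Q_l(N_V;(\theta_k))\big]=h_l^{-1}\kappa_l$ — this is the identity $\kappa_l=h_l\mathbb E[Q_l(V;(\theta_k))]$ recorded in Proposition \ref{qexpansion}, which holds because $Q_l(N_V;(\theta_k))$ is the coefficient of $w_1^{l_1}\cdots w_{\d-1}^{l_{\d-1}}$ in $\prod_{c=1}^\N\big(1+\sum_{k=1}^{\d-1}w_k\theta_k^{V[c]}\big)$, and exchangeability of $V$ makes the $h_l^{-1}$ monomial-selecting terms in that coefficient have a common expectation equal to $\kappa_l$ as in (\ref{eval:26}).

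There is essentially no hard step here: the content is the elementary remark that the increment law is free of $x$, together with the evaluation $h_lQ_l(m^0;(\theta_k))=1$ (equivalently the moment formula for $\mathbb E[Q_l(N_V)]$). The only points that need care are the combinatorial bookkeeping identifying that coefficient with $h_l^{-1}$, and the use of the reality of $(\kappa_l)$ to drop the conjugate on $Q_l(n;(\theta_k))$ so that the stated real form (\ref{spectral:006}) is obtained.
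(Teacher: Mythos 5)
Your proof is correct and follows essentially the same route as the paper: reduce to a transition involving the all-zero state, use $h_lQ_l\big((\N,0,\ldots,0);(\theta_k)\big)=1$, and substitute into (\ref{spectral:001}). The only cosmetic difference is that the paper places the all-zero count vector in the conjugated slot of (\ref{spectral:001}) and so obtains $Q_l(n_{x-y};(\theta_k))$ directly, whereas you place it in the unconjugated slot and then remove the conjugate using the reality of $(\kappa_l)$.
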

\begin{proof}
The form of (\ref{spectral:02a}) can be expressed instead by a modification where $x[k]$ is replaced by 
$x[k]-y[k]\mod q$ and $y[k]$ is replaced by zero for $k \in {\cal V}_q$.
Consider this in the expansion (\ref{spectral:001}).
When all the counts in $y$ are zero, $n_{y}[0]=d$ and
\[
h_l\overline{Q_l\big ((d,0,\ldots, 0);(\theta_k)\big )} = h_l\frac{d!}{(
d-|l|)!l_1!\cdots l_{q-1}!} = 1.
\]
Substituting the modified functions into (\ref{spectral:001}) gives (\ref{spectral:006}).
\end{proof}
\begin{Lemma}\label{QtoKCorr}
\begin{equation}
\mathbb{E}\big [Q_l(M;(\theta_k))\mid M[0]=m[0]\big ] 
= (q - 1)^{-|l|}{|l|\choose l}
  K_{|l|}(d-m[0];d,q).
\label{QtoK:00a}
\end{equation}
\end{Lemma}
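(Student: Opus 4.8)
The plan is to compute the conditional expectation of the generating function $G(M;w,\d,\N)$ from (\ref{genfn:00}) and match the result against the one-variable Krawtchouk generating function (\ref{genfn:S}). First I would use the product representation on the right of (\ref{genfn:00}), in which $M$ is the vector of type-counts of i.i.d.\ uniform variables $X[1],\dots,X[\N]$ on ${\cal V}_\d$. Conditioning on $M[0]=m[0]$ and using exchangeability of the $X[c]$, the $m[0]$ coordinates with $X[c]=0$ contribute (since $\theta_k^0=1$) a deterministic factor $\big(1+\sum_{k=1}^{\d-1}w_k\big)^{m[0]}$, while the remaining $\N-m[0]$ coordinates are conditionally i.i.d.\ uniform on $\{1,\dots,\d-1\}$. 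Taking expectations factor by factor and using $\sum_{j=0}^{\d-1}\theta_k^j=0$ for $k\in[\d-1]$, hence $\mathbb{E}[\theta_k^{X}]=-1/(\d-1)$ for $X$ uniform on $\{1,\dots,\d-1\}$, one gets, with $u:=\sum_{k=1}^{\d-1}w_k$,
\[
\mathbb{E}\big[G(M;w,\d,\N)\mid M[0]=m[0]\big]=(1+u)^{m[0]}\Big(1-\tfrac{u}{\d-1}\Big)^{\N-m[0]}.
\]

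Next I would extract the coefficient of $w_1^{l_1}\cdots w_{\d-1}^{l_{\d-1}}$, which by definition is $\mathbb{E}[Q_l(M;(\theta_k))\mid M[0]=m[0]]$ (see Proposition \ref{qexpansion}). Since the right-hand side above depends on $w$ only through $u$, and $u^{|l|}=(\sum_k w_k)^{|l|}$ has $w_1^{l_1}\cdots w_{\d-1}^{l_{\d-1}}$-coefficient ${|l|\choose l}$, the left side of (\ref{QtoK:00a}) equals ${|l|\choose l}$ times the coefficient of $u^{|l|}$ in $(1+u)^{m[0]}(1-u/(\d-1))^{\N-m[0]}$. Substituting $u=(\d-1)v$ rewrites this polynomial as $(1+(\d-1)v)^{m[0]}(1-v)^{\N-m[0]}$, which is exactly the Krawtchouk generating function (\ref{genfn:S}) with $m$ replaced by $\N-m[0]$; reading off the coefficient of $v^{|l|}$ gives $K_{|l|}(\N-m[0];\N,\d)$, and since $v^{|l|}=(\d-1)^{-|l|}u^{|l|}$ the coefficient of $u^{|l|}$ is $(\d-1)^{-|l|}K_{|l|}(\N-m[0];\N,\d)$. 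Multiplying by ${|l|\choose l}$ then yields (\ref{QtoK:00a}).

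The only point needing care is the conditional law used in the first step: conditioning the uniform $\mathrm{Multinomial}(\N,1/\d)$ vector on $M[0]=m[0]$ must leave the remaining counts distributed as $\mathrm{Multinomial}(\N-m[0],1/(\d-1),\dots,1/(\d-1))$ — equivalently, the non-zero $X[c]$ are conditionally i.i.d.\ uniform on $\{1,\dots,\d-1\}$. This is a standard multinomial fact, but if one prefers to sidestep it, the same factor $(1-u/(\d-1))^{\N-m[0]}$ falls out of $\mathbb{E}\big[\prod_{j=1}^{\d-1}y_j^{M[j]}\bigm| M[0]=m[0]\big]=\big(\tfrac{1}{\d-1}\sum_{j=1}^{\d-1}y_j\big)^{\N-m[0]}$ via the multinomial theorem, evaluated at $y_j=1+\sum_{k=1}^{\d-1}w_k\theta_k^j$ and using $\sum_{j=1}^{\d-1}\theta_k^j=-1$. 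Beyond that the argument is routine generating-function bookkeeping, so I do not expect a genuine obstacle.
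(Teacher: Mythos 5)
Your proof is correct and follows essentially the same route as the paper: both reduce the conditional expectation of the generating function $G(M;w,\d,\N)$ given $M[0]=m[0]$ to $(1+\sum_k w_k)^{m[0]}(1-\frac{1}{\d-1}\sum_k w_k)^{\N-m[0]}$ (the paper by summing ${\N\choose m}\d^{-\N}G(m;w,\d,\N)$ over $m$ with $m[0]$ fixed via the multinomial theorem and the identity $\sum_{j=1}^{\d-1}\theta_k^j=-1$; you by the equivalent i.i.d.\ product representation and the conditional multinomial law) and then identify coefficients of $w_1^{l_1}\cdots w_{\d-1}^{l_{\d-1}}$ with the Krawtchouk generating function (\ref{genfn:S}). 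Your extraction of the factor ${|l|\choose l}(\d-1)^{-|l|}$ is the same bookkeeping the paper leaves implicit.
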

\begin{proof}
\begin{align}
&\sum_{m:m[0]\text{~fixed}}{d\choose m}\Big (\frac{1}{q}\Big )^d G(m;w,q,d)
\nonumber\\
&= {d\choose m[0]}\Big (\frac{1}{q}\Big )^d\Big (1 + \sum_{k=1}^{q-1}w_k\Big )^{m[0]}
\Big (\sum_{j=1}^{q-1}\Big (1 + \sum_{k=1}^{q-1}w_k\theta_k^j\big )\Big )^{d-m[0]}
\nonumber\\
&= {d\choose m[0]}\Big (\frac{1}{q}\Big )^{m[0]}\Big (1 - \frac{1}{q}\Big )^{d-m[0]}
\Big (1 + \sum_{k=1}^{q-1}w_k\Big )^{m[0]}
\Big (1 - \frac{1}{q-1}\sum_{k=1}^{q-1}w_k\Big )^{d-m[0]}
\label{QtoK:01}
\end{align}
Identifying (\ref{QtoK:01}) with the generating function (\ref{genfn:S}) and finding coefficients
of $w_1^{l_1}\cdots w_{q-1}^{l_{q-1}}$ shows (\ref{QtoK:00a}).
\end{proof}

\begin{Remark}\label{sl1}
The multivariate Krawtchouk polynomials discussed in this
paper are a special case of those discussed in \citet{DG2014}.
The complete set of orthogonal functions 
\begin{equation}
  (u_0^{(k)},u_1^{(k)},u_2^{(k)},\ldots,u_{q-1}^{(k)}), \quad
  k\in\mathcal{V}_q
  \label{seq1}
\end{equation}
is now taken to be
\[
  (1,\theta_k^1,\theta_k^2,\ldots,\theta_k^{q-1}), \quad
  k\in\mathcal{V}_q
\]
with  orthogonality
\[
  \frac{1}{q}\sum_{j=0}^{q-1} u^{(k)}_j \overline{u^{(l)}_j}
  =\frac{1}{q}\sum_{j=0}^{q-1} \theta^j_k \overline{\theta^j_l}
  =\delta_{kl}.
\]
The development of the multivariate Krawtchouk polynomials with this basis is in
\cite{M2004}, where the multivariate Krawtchouk polynomials were identified as the Aomoto-Gelfand hypergeometric polynomials. The generalized version, where \eqref{seq1} is determined by character algebras, was presented in \cite{MT2004}. 
 The duality relationship (\ref{MVKdual:00})
is given as Equation (33) in \citet{DG2014}.
\end{Remark}
The normalized version of the multivariate Krawtchouk polynomials
discussed here are the zonal spherical functions
of the Gelfand pair $(K\wr S_d,L\wr S_d)$, where $(K,L)=(\mathbb{Z}_q,\{0\})$ is again a Gelfand pair, and $S_d$ is the symmetric group. Here, the character algebra in Remark~\ref{sl1} is the Hecke algebra of $(\mathbb{Z}_q,\{0\})$.
The zonal spherical functions are \cite{M2004}
\[
  \omega_{l+}^m=h_m m_{\lambda(m)}(\underbrace{1,\ldots,1}_{l_0},\underbrace{\theta_1,\ldots,\theta_1}_{l_1},\cdots,\underbrace{\theta_{q-1},\ldots,\theta_{q-1}}_{l_{q-1}})=h_m Q_l(m;(\theta_k)),
\]
where $\lambda(m)$ is the partition $0^{m_0}1^{m_1}\cdots(q-1)^{m_{q-1}}$
and $m_{\lambda(m)}$ is the monomial symmetric function.
For example, if $q=3$, $d=4$, $l=(2,1)$, and $m=(1,1,2)$,
$\lambda(m)=(0^11^12^2)$ and we can confirm that
\[
Q_{(2,1)}((1,1,2);(\theta_k))=m_{1^12^2}(1,\theta_1,\theta_1,\theta_2)=
-3e^{4\pi i/3}.
\]
An element $g=(g_1,\ldots,g_{d};\sigma)\in \mathbb{Z}_q\wr S_d$ acts on
$\mathcal{V}_{q,d}$ as
\[
  (g_1,\ldots,g_{d};\sigma)(a_1,\ldots,a_{d})=
  (g_1+a_{\sigma^{-1}(1)},\ldots,g_{d}+a_{\sigma^{-1}(d)})
\]
and the stochastic matrix is invariant under the group action,
because
\[
  P^{\otimes d}(ga,gb)=\prod_{k=1}^{d}P(a_k+g_{\sigma^{-1}(k)},b_k+g_{\sigma^{-1}(k)})
  =\prod_{k=1}^{d}P(a_k,b_k)=P^{\otimes d}(a,b),
\]
where the second equality holds because $P$ is a circulant matrix.
It is known that we have a spectral expansion in terms of the zonal
spherical functions.  
\begin{Example}\label{Mexample}\
  Spectral expansions of multitype random walks of generic circulant random walks demand the zonal spherical functions of the Gelfand pair $(\mathbb{Z}_q\wr S_d,S_d)$ as just we described. Mizukawa \cite{M2018} considered spectral expansions of multitype random walks, based on Gelfand pairs $(K\wr S_d,L\wr S_d)$ with some Gelfand pairs $(K,L)$ determined by the stochastic matrix as a function on $(K/L)^d\times(K/L)^d$, which are reported in  \cite{DG2014}. These are special cases of circulant random walks discussed here. 
 \begin{itemize}
\item
  Example 2.2(2), \cite{M2018} with $p=1$. 
  Pick a coordinate with probability $1/d$  and move one unit to the left in $\mathcal{V}_q$. This is an extension of Example \ref{Example:1}  and an ergodic random walk.
  The eigenvalues have the form
  \[
  \kappa_l=\frac{1}{d}\sum_{j=0}^{q-1}l_j\theta_1^{-j}.
  \]
  We need the zonal spherical functions of the Gelfand pair $(K\wr S_d,L\wr S_d)=(\mathbb{Z}_q\wr S_d,S_d)$.
\item  Example 2.2(3), \cite{M2018}.
  With probability $1-\gamma$, the random walk does not move. If it moves,
  pick
  a coordinate with probability $1/d$ and move to a left or right neighbour.
  This is an extension of Example \ref{Example:2}.
  It is ergodic. This is not a symmetric
  random walk so we need multivariate Krawtchouk polynomials.
  The eigenvalues have the form
  \[
  \kappa_l=(1-\gamma)+\frac{\gamma}{2d}\sum_{j=0}^{q-1}l_j(\theta_1^{-j}+\theta_1^{j}) 
  =(1-\gamma)+ \frac{\gamma}{d}\sum_{j=0}^{q-1}l_j\cos(2\pi/q\cdot j).
  \]
  This process is reversible because the eigenvalues are real.
  We may take $(K,L)=(D_q,\langle b\rangle)$, where $D_q=\{\langle a,b\rangle|a^q=b^2=(ab)^2=1\}$ is the dihedral group. The double coset $L\backslash K/L=\mathbb{Z}_q$ is identical to that of the above example. The zonal spherical function of $(D_q\wr S_d,\langle b\rangle\wr S_d)$ is given by taking the complete set of orthogonal functions
  \[
  (1,\cos (2\pi k/q\cdot 1),\ldots,\cos(2\pi k/q\cdot(q-1))), \quad
  k\in\mathcal{V}_q,
  \]
  or the zonal spherical functions of $(D_q,\langle b\rangle)$, for \eqref{seq1}. See \cite{AM2003}.
  \item The non-lazy version of Example 2.2(1), \cite{M2018} ($p=1/(r-1)$).
  Pick a coordinate with $1/d$  and pick a position from $\mathcal{V}_q$ except for the current position. This is precisely the same random walk as Example 2.1 in \cite{GM2025}.
  We may take $(K,L)=(S_q,S_{q-1})$. A complete representative set of the double coset  $L\backslash K/L$ is $\{e_K,(1,q)\}$, where $(1,q)\in S_q$ is a transposition. The double coset is two-dimensional and parametrized by $(k_0,k_1=d-k_0)$, and the zonal spherical functions of $(S_q,S_{q-1})$ are $\omega_0=1$, $\omega_1(x_0)=-1$ and $\omega_1(x_1)=1$. Then, the zonal spherical functions of $(S_q\wr S_d,S_{q-1}\wr S_d)$ are the univariate Krawtchouk polynomials (see Proposition 2.4, \cite{M2018}). A probabilistic argument why we do not need multivariate Krawtchouk polynomials will be given in Remark~\ref{Hamming:23}.
\end{itemize}
\end{Example}
\begin{Remark}
If $q=2$ the Multivariate Krawtchouk polynomials are the Krawtchouk polynomials $\big (K_j(m[1];d,1/2)\big )_{j=0}^d$, orthogonal on the Binomial$(m[1];d,1/2)$ distribution. The generating function (\ref{genfn:00}) becomes 
\begin{equation*}
G(m;w,2,d) = (1+w[1])^{d-m[1]}(1-w[1])^{m[1]}.
\end{equation*}
\cite{DG2012} use a scaled version 
\[
Q^K_j(m[1];d,1/2) = {d\choose j}^{-1}Q_j(m[1];d,1/2). 
\]
Then there is duality
with $Q_j^K(m[1];d,1/2) = Q^K_{m[1]}(j;d,1/2)$, the same as (\ref{MVKdual:00}) when $q=2$.
\end{Remark}
\begin{Remark}
Let
\begin{equation}
Q_{|l|}(n,m) = \sum_{l:|l|\text{~fixed}}h_lQ_l(m;(\theta_k))\overline{Q_l(n;(\theta_k))}.
\label{RKP:88}
\end{equation}
The polynomials (\ref{RKP:88}) in two variables are called reproducing kernel polynomials on the multinomial. 
These important polynomials do not depend on the basis used for the polynomials in their sum, so do not depend on $\big (\theta_1^r\big )$ in our case. They are studied in \cite{DG2019} and have an explicit form. 
Denote the multinomial distribution as
\[
\p(n) = {|n|\choose n}\big (\frac{1}{q}\big )^{|n|},\ |n|\text{~fixed}.
\]
Then
\begin{equation}
Q_{|l|}(n,m)\p(n)\p(m) = \sum_{k=0}^{|l|}(-1)^{|l|-k}{d\choose k}{d-k\choose |l|-k}\p(n)\p(m)\zeta_k(n,m),
\label{RKP:333}
\end{equation}
where 
\[
\p(n)\p(m)\zeta_k(n,m) = \sum_{w:|w|=k}\mathbb{P}\big (n\mid w)\mathbb{P}\big (m\mid w)\mathbb{P}\big (w)
\]
with 
$
\mathbb{P}\big (w) = \p(w),\ \mathbb{P}\big (n\mid w) = \p(n-w),\ \mathbb{P}\big (m\mid w)= \p(m-w).
$
The formula (\ref{RKP:333}) comes from Remark 7 in \cite{DG2019}.
$\p(n)\p(m)\zeta_k(n,m)$ has a probability interpretation of obtaining a configuration of $(n,m)$ when a sample of $|w|$ is taken in common for the first observations of $|n|=d,|m|=d$, then the remanding $|n|-|w|, |m|-|w|$ are chosen independently. The joint \pgf of $\p(n)\p(m)\zeta_k(n,m)$ is
\begin{equation*}
G_k(s,s^\prime)=
\Big (\frac{1}{q}\Big )^{2d-k}\Big (\sum_{j=0}^{q-1}s_js^\prime_j\Big )^k
\Big (\sum_{j=0}^{q-1}s_j\Big )^{d-k}
\Big (\sum_{j=0}^{q-1}s^\prime_j\Big )^{d-k}.
\end{equation*} 
\end{Remark}
\medskip

It is useful to have an estimate of the multivariate Krawtchouk polynomials as $d \to \infty$. The next Lemma gives a simple estimate. A more sophisticated central limit theorem that follows is in Lemma \ref{QCLT}.
\begin{Lemma}\label{Kplimit}
Take $n^{(d)}$ such that
 $d^{-1}n^{(d)}[j]\to z[j]$, $j=1,\ldots, q-1$, so $d^{-1}n^{(d)}[0] \to 1 - |z|$. 
Then
\begin{equation}
\lim_{d \to \infty}
h_lQ_l(n^{(d)};(\theta_k))
= \prod_{k=1}^{q-1}\Big ( 1 - |z| + \sum_{j=1}^{q-1}z[j]\theta_k^j\Big )^{l_k}.
\label{asymptotic:99}
\end{equation}
If $q=2$, $l$ has dimension one, and
\begin{equation}
\lim_{d \to \infty}
h_{l_1}Q_{l[1]}(n^{(d)};(\theta_k))
= \Big ( 1 - 2z[1] \Big )^{l_1}.
\label{asymptotic:99a}
\end{equation}

\end{Lemma}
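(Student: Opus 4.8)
The plan is to extract $h_lQ_l$ as a coefficient in the generating function (\ref{genfn:00}), expand that generating function by the binomial theorem, and pass to the limit term by term.

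Recall that $Q_l(m;(\theta_k))$ is the coefficient of $w_1^{l_1}\cdots w_{\d-1}^{l_{\d-1}}$ in
\[
G(m;w,\d,\N)=\Big(1+\sum_{k=1}^{\d-1}w_k\Big)^{m[0]}\prod_{j=1}^{\d-1}\Big(1+\sum_{k=1}^{\d-1}w_k\theta_k^{j}\Big)^{m[j]},
\]
and that $h_l^{-1}=\N!/\big((\N-|l|)!\,l_1!\cdots l_{\d-1}!\big)$; since $|l|$ is fixed, $h_l=\big(l_1!\cdots l_{\d-1}!\big)\,\N^{-|l|}\,(1+O(\N^{-1}))$. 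Write $L_j:=\sum_{k=1}^{\d-1}w_k\theta_k^{j}$, so that $L_0=\sum_kw_k$, and expand each factor of $G$ by the binomial theorem. Since $\prod_jL_j^{r_j}$ is homogeneous of degree $\sum_jr_j$ in $w$, the coefficient of $\prod_kw_k^{l_k}$ in $G(n^{(\N)};w,\d,\N)$ is the finite sum
\[
Q_l(n^{(\N)};(\theta_k))=\sum_{r_0+\cdots+r_{\d-1}=|l|}\Big(\prod_{j=0}^{\d-1}\binom{n^{(\N)}[j]}{r_j}\Big)c_{\mathbf r},\qquad c_{\mathbf r}:=\Big[\prod_kw_k^{l_k}\Big]\prod_{j=0}^{\d-1}L_j^{r_j},
\]
in which the number of summands and each $c_{\mathbf r}$ depend only on $l$, $\mathbf r$ and $\d$, not on $\N$.

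Put $z[0]:=1-|z|$, so that $\N^{-1}n^{(\N)}[j]\to z[j]$ for every $j\in{\cal V}_\d$. For fixed $r_j$, $\binom{n^{(\N)}[j]}{r_j}=\N^{r_j}(z[j])^{r_j}/r_j!+o(\N^{r_j})$ (valid also when $z[j]=0$, where both sides are then $0$ for $r_j\ge1$), and since $\sum_jr_j=|l|$ this gives $\prod_j\binom{n^{(\N)}[j]}{r_j}=\N^{|l|}\prod_j(z[j])^{r_j}/r_j!+o(\N^{|l|})$. Multiplying by $h_l$ and letting $\N\to\infty$ inside the finite sum,
\[
\lim_{\N\to\infty}h_lQ_l(n^{(\N)};(\theta_k))=\big(l_1!\cdots l_{\d-1}!\big)\sum_{r_0+\cdots+r_{\d-1}=|l|}\Big(\prod_{j=0}^{\d-1}\frac{(z[j])^{r_j}}{r_j!}\Big)c_{\mathbf r}.
\]
The right-hand sum is exactly $\big[\prod_kw_k^{l_k}\big]\prod_{j=0}^{\d-1}e^{z[j]L_j}=\big[\prod_kw_k^{l_k}\big]\exp\big(\sum_kw_ka_k\big)$, where $a_k:=\sum_{j=0}^{\d-1}z[j]\theta_k^{j}=1-|z|+\sum_{j=1}^{\d-1}z[j]\theta_k^{j}$, using $\sum_jz[j]L_j=\sum_kw_k\sum_jz[j]\theta_k^{j}$. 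Since $\big[\prod_kw_k^{l_k}\big]\exp\big(\sum_kw_ka_k\big)=\prod_{k=1}^{\d-1}a_k^{l_k}/l_k!$, multiplying back by $l_1!\cdots l_{\d-1}!$ gives exactly (\ref{asymptotic:99}). For $\d=2$ one has $\theta_1=-1$, so $a_1=1-z[1]+z[1]\theta_1=1-2z[1]$, which is (\ref{asymptotic:99a}).

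The one step needing comment — the main (and rather mild) obstacle — is the interchange of $\lim_{\N\to\infty}$ with the sum over $\mathbf r$ and with the binomial asymptotics; this is legitimate because $|l|$, hence every $r_j$, is bounded uniformly in $\N$, so only finitely many elementary limits are involved. Equivalently one may substitute $w_k\mapsto u_k/\N$, check that $G(n^{(\N)};u/\N,\d,\N)\to\exp\big(\sum_ka_ku_k\big)$ locally uniformly on $\mathbb{C}^{\d-1}$, and invoke the fact that locally uniform convergence of holomorphic functions passes to their Taylor coefficients.
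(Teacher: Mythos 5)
Your proof is correct and follows essentially the same route as the paper: rescale $w\mapsto w/\N$ in the generating function (\ref{genfn:00}), identify the exponential limit $\exp\{\sum_k w_k(1-|z|+\sum_j z[j]\theta_k^j)\}$, and extract the coefficient of $\prod_k w_k^{l_k}$, using $h_l\sim l_1!\cdots l_{\d-1}!\,\N^{-|l|}$. Your explicit finite-sum expansion over $r_0+\cdots+r_{\d-1}=|l|$ (or the locally uniform convergence remark) supplies the justification for interchanging the limit with coefficient extraction, a step the paper's proof leaves implicit.
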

\begin{proof}
Consider the limit in the generating function where $w$ is replaced by $d^{-1}w$.
\begin{align*}
&\big (1 + d^{-1}\sum_{k=1}^{q-1}w_k\big )^{d - \sum_{j=1}^{q-1}n^{(d)}[j]}
\prod_{j=1}^{q-1}\big (1 + d^{-1}\sum_{k=1}^{q-1}w_k\theta_k^j\big )^{n^{(d)}[j]}\\
&\to \exp \Big \{
(1-|z|)\sum_{k=1}^{q-1}w_k + \sum_{j=1}^{q-1}z[j]\sum_{k=1}^{q-1}w_k\theta_k^j\Big \}.
\end{align*}
Identifying coefficients of $\prod_{k=1}^{q-1}w_k^{l_k}$
\[
d^{-|l|}Q_l(n^{(d)};(\theta_k)) \to \prod_{k=1}^{q-1}\frac{1}{l_k!}
\Big ( 1 - |z| + \sum_{j=1}^{q-1}z[j]\theta_k^j\Big )^{l_k},
\]
which is equivalent to the statement of the Lemma  because $h_l = {d\choose l}^{-1}$ and
\[
\frac{h_ld^{|l|}}{\prod_{k=1}^{q-1}l[k]!} \to 1.
\]
\end{proof}
\begin{Lemma}\label{RKP:001}
In the reproducing kernel polynomials $Q_{|l|}(n^{(d)},m^{(d)})$ take 
\[
d^{-1}n^{(d)}[j] \to \xi[j],\ d^{-1}m^{(d)}[j] \to \eta[j],\ j=1,\ldots, q-1.
\]
Then
\begin{equation*}
\lim_{d\to \infty}
{d\choose |l|}^{-1}Q_{|l|}(n^{(d)},m^{(d)}) =
\Big (q(1-|\xi|)(1-|\eta|) + q\sum_{j=1}^{q-1}\xi[j]\eta[j]-1\Big )^{|l|}.
\label{QQ:77}
\end{equation*}
\end{Lemma}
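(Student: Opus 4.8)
The plan is to feed the pointwise limits of Lemma~\ref{Kplimit} into the defining sum (\ref{RKP:88}),
\[
Q_{|l|}(n^{(\N)},m^{(\N)}) = \sum_{l:|l|\text{ fixed}} h_l\, Q_l(m^{(\N)};(\theta_k))\,\overline{Q_l(n^{(\N)};(\theta_k))}.
\]
The one subtlety is that only a single factor $h_l$ appears per summand, whereas Lemma~\ref{Kplimit} controls the products $h_lQ_l(m^{(\N)};(\theta_k))$ and $h_l\overline{Q_l(n^{(\N)};(\theta_k))}$ individually. I would therefore write each summand as $h_l Q_l(m^{(\N)})\overline{Q_l(n^{(\N)})} = h_l^{-1}\big(h_lQ_l(m^{(\N)})\big)\big(h_l\overline{Q_l(n^{(\N)})}\big)$ (with $h_l$ a positive real, so conjugation commutes with multiplication by $h_l$) and use the exact combinatorial identity ${\N\choose|l|}^{-1}h_l^{-1} = {\N\choose|l|}^{-1}{\N\choose l} = {|l|\choose l}$, giving
\[
{\N\choose|l|}^{-1}Q_{|l|}(n^{(\N)},m^{(\N)}) = \sum_{l:|l|\text{ fixed}}{|l|\choose l}\,\big(h_lQ_l(m^{(\N)};(\theta_k))\big)\,\big(h_l\overline{Q_l(n^{(\N)};(\theta_k))}\big).
\]

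Next I would apply Lemma~\ref{Kplimit}. With $\N^{-1}m^{(\N)}[j]\to\eta[j]$ it yields $h_lQ_l(m^{(\N)};(\theta_k))\to\prod_{k=1}^{\d-1}a_k^{l_k}$ where $a_k = 1-|\eta|+\sum_{j=1}^{\d-1}\eta[j]\theta_k^j$; applying it to $n^{(\N)}$ and conjugating (\ref{asymptotic:99}), which only affects the $\theta_k$ since the limits $\xi[j]$ are real, gives $h_l\overline{Q_l(n^{(\N)};(\theta_k))}\to\prod_{k=1}^{\d-1}b_k^{l_k}$ where $b_k = 1-|\xi|+\sum_{j=1}^{\d-1}\xi[j]\theta_k^{-j}$. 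The sum over $l$ at fixed $|l|$ is a finite sum, so one may pass to the limit term by term, and the multinomial theorem collapses the result:
\[
\lim_{\N\to\infty}{\N\choose|l|}^{-1}Q_{|l|}(n^{(\N)},m^{(\N)}) = \sum_{l:|l|\text{ fixed}}{|l|\choose l}\prod_{k=1}^{\d-1}(a_kb_k)^{l_k} = \Big(\sum_{k=1}^{\d-1}a_kb_k\Big)^{|l|}.
\]

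Finally I would evaluate $\sum_{k=1}^{\d-1}a_kb_k$ by setting $\eta[0]=1-|\eta|$, $\xi[0]=1-|\xi|$, so that $a_k=\sum_{j=0}^{\d-1}\eta[j]\theta_k^{j}$ and $b_k=\sum_{j=0}^{\d-1}\xi[j]\theta_k^{-j}$, and using the orthogonality of the roots of unity $\sum_{k=0}^{\d-1}\theta_k^{\,j-j'}=\d\,\delta_{jj'}$, hence $\sum_{k=1}^{\d-1}\theta_k^{\,j-j'}=\d\,\delta_{jj'}-1$. Since $\sum_{j=0}^{\d-1}\eta[j]=\sum_{j=0}^{\d-1}\xi[j]=1$, this gives $\sum_{k=1}^{\d-1}a_kb_k = \d\sum_{j=0}^{\d-1}\eta[j]\xi[j]-1 = \d(1-|\xi|)(1-|\eta|)+\d\sum_{j=1}^{\d-1}\xi[j]\eta[j]-1$, which is the claimed limit. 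I expect the main obstacle to be purely organizational: making the $\N$-powers cancel cleanly (one $h_l$ versus two Krawtchouk factors) and recognizing the resulting $l$-sum as a multinomial expansion; there is no analytic difficulty, since the $l$-sum is finite and Lemma~\ref{Kplimit} supplies the pointwise limits directly.
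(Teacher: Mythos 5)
Your proposal is correct and follows essentially the same route as the paper: rewrite each summand as ${|l|\choose l}\bigl(h_lQ_l(m^{(\N)})\bigr)\bigl(h_l\overline{Q_l(n^{(\N)})}\bigr)$ using ${\N\choose |l|}^{-1}h_l^{-1}={|l|\choose l}$, pass to the limit term by term via Lemma \ref{Kplimit}, collapse with the multinomial theorem, and evaluate $\sum_{k=1}^{\d-1}a_kb_k$ by roots-of-unity orthogonality. Your write-up is in fact slightly more explicit than the paper's on the conjugation step and on the final orthogonality computation (the paper instead adjoins the $k=0$ term and subtracts $1$, which is the same calculation).
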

\begin{proof}
Using Lemma \ref{Kplimit} to find the limit
\begin{align*}
&\lim_{d\to \infty}
{d\choose |l|}^{-1}Q_{|l|}(n^{(d)},m^{(d)})\\
&= \lim_{d\to \infty}\sum_{l:|l|\text{~fixed}}
{d\choose |l|}^{-1}h_l^{-1}h_lQ_l(m;(\theta_k))h_l\overline{Q_l(n;(\theta_k))}\\
&= \lim_{d\to \infty}\sum_{l:|l|\text{~fixed}}
{|l|\choose l}h_lQ_l(m;(\theta_k))h_l\overline{Q_l(n;(\theta_k))}\\
&=\Big (q(1-|\xi|)(1-|\eta|) + q\sum_{j=1}^{q-1}\xi[j]\eta[j]-1\Big )^{|l|}
\end{align*}
\end{proof}
\begin{Remark}
The chi-squared distance between $\mathbb{P}_t\big (n\mid m\big )$, the $t$-step transition probability corresponding to (\ref{spectral:001}) in Proposition \ref{qexpansion} and the stationary distribution $\p(n)$ is equal to
\begin{align}
\chi^2_t(m;d) &= \sum_{n \in {\cal P}_{d,q}}
\frac{\big |\mathbb{P}_t\big (n\mid m\big )-\p(n)\big |^2}{\p(n)}
\nonumber\\
&= \sum_{l\in  {\cal P}_{d,q}}|\kappa_l|^{2t}h_l|Q_l(m)|^2
\label{chisq:001}
\end{align}
If $\kappa_l$ only depends on $|l|$ then $\chi^2_t(m;d)=\sum_{|l|=0}^d|\kappa_l|^{2t}Q_{|l|}(m,m)$.
In (\ref{chisq:001}) $\kappa_l$ depends on $d$ and we are interested in an asymptotic form for 
$\chi^2_t(m;d)$ as $d\to \infty$ and $t\to \infty$. Adopting notation and  knowing (\ref{asymptotic:99}) in Lemma \ref{Kplimit}, as $d\to \infty$
\begin{align}
\chi^2_t(m;d) &\sim \sum_{l\in  {\cal P}_{d,q}}|\kappa_l|^{2t}
h_l^{-1}\prod_{k=1}^{q-1}\Big |1 - |z| + \sum_{j=1}^{q-1}z[j]\theta_k^j\Big |^{2l_k}\nonumber\\
&\leq \sum_{l\in  {\cal P}_{d,q}}|\kappa_l|^{2t}h_l^{-1}
\label{chisq:002}
\end{align}
At $m_0$ when $m[0]=d$, $m[j]=0$, $j=1,\ldots, q-1$, since $Q_l\big (m_0; (\theta_j)\big )=h_l^{-1}$ the exact value of chi-squared is 
\begin{equation*}
\chi^2_t(m_0;d)=
\sum_{l\in  {\cal P}_{q,d}}|\kappa_l|^{2t}h_l^{-1},
\end{equation*}
identical to (\ref{chisq:002}). 
These chi-squared bounds lead to cutoff estimates, as in the next Example \ref{simple:000}.
\end{Remark}
\begin{Example}\label{simple:000}
A model that is developed in \cite{CG2021} when $q=2$ is to choose uniformly a random subset of size $z_q$ from $X_t$ in (\ref{rweqn:000}) and then toggle the identity of these chosen variables. 
This is a randomized version of Example 2.3 in \cite{GM2025}.
The cutoff time is found in \cite{CG2021} when $d \to \infty$. We consider an extension of this model to a random walk on ${\cal V}_{q,d}$. Choose a subset of $\A$ variables in $X_t$ to change to form $X_{t+1}$. In an entry $X_t[j]$ that is changed $X_t[j] \to X_t[j] + V$ where $V$ is uniform on $1,2,\ldots, q-1$. $(X_t)$ is an ergodic Markov chain. In calculating the eigenvalues $\kappa_l$ in (\ref{spectral:001}), what is important is $|l|$.
If $r[j]>0$ and $X_t[j]$ is chosen to change then $\sum_{v\ne 0}e^{2\pi i r[j]v}= -1$. The other product terms in the eigenvalues are unity, therefore
\begin{equation*}
\kappa_l = \sum_{n=0}^{|l|}(-1)^n
\frac{
{|l|\choose n}{d-|l|\choose \A-n}
}
{
{|l|\choose \A}
}
= {d\choose |l|}^{-1}Q^K_{|l|}(\A;d,1/2).
\end{equation*}
As an extension we could choose $\A$ to be random. 
Simplifying
\begin{equation*}
\mathbb{P}\big (n\mid m) =
\p(n)\Big \{1 + \sum_{|l|=1}^\A {d\choose |l|}^{-1}Q^K_{|l|}(\A;d,1/2)Q_{|l|}(n,m) \Big \},
\end{equation*}
where $Q_{|l|}(n,m)$ are reproducing kernel polynomials. Suppose that $\A/d = \K\leq 1/2$ as $d \to \infty$. 
The chi-squared estimate between $\mathbb{P}_t\big (\cdot \mid m)$ and the stationary distribution $\p(\cdot)$ in this model is
\begin{align*}
\chi^2_t(m;d) 
&= \sum_{|l|=1}^\A\kappa_{|l|}^{2t}|Q_{|l|}(m,m)|^2\nonumber \\
&\sim \sum_{|l|=1}^{\A}{d\choose |l|} (1-2\K)^{2|l|t}{d\choose |l|}^{-1}Q_{|l|}(m,m)\nonumber \\
&< \Big (1 + (1-2\K)^{2t}\Big (q(1-|z|)^2 + q\sum_{j=1}^{q-1}z[j]^2-1\Big )\Big )^d - 1,
\end{align*}
where we used
\[
  \mathbb{E}\Big [Q_{|l|}(N,m)\overline{Q_{|l^\prime|}(N,m)}\Big ]  
  =Q_{|l|}(m,m)\delta_{|l|,|l^\prime|}\ge 0.    
\]
Lemma \ref{Kplimit}, (\ref{asymptotic:99a}) gives
\[
{d\choose |l|}^{-1}Q^K_{|l|}(\A,d,1/2) \to (1-2\K)^{2|l|}.
\]
If $\K=1/2$, $\chi^2_t(m;d)\sim 0$, implying mixing in 1-step. Suppose $\K < 1/2$.\\

The maximum chi-squared value occurs when all entries of $z$ are zero, then\\
\begin{align}
\max_{m \in {\cal V}_{q,d}}\chi^2_t(m;d) &= 
\Big (1 + (1-2\K)^{2t}(q-1)\Big )^d -1\nonumber\\
&\leq e^{d (q-1)(1-2\K)^{2t}} - 1\nonumber\\
&\leq e^{ d (q-1)e^{-4\K t}}-1
\label{Upper:00}
\end{align}
Choose 
\begin{equation*}
t_C= \frac{1}{4\K}\Big (\log\big (d(q-1)\big )+ C\Big ),
\end{equation*}
where $C$ is a constant. When $t=t_C$ the upper bound (\ref{Upper:00}) is equal to
\begin{equation*}
e^{e^{-C}}-1.
\end{equation*}
For a lower bound 
\begin{align*}
\max_{m \in {\cal V}_{q,d}}\chi^2_t(m;d) 
\geq\chi^2_t(m_0;d)
\geq d(q-1)(1-2\K)^{2t},
\end{align*}
because 
\begin{align*}
\chi_t^2(m_0,d)&= \sum_{|l|=1}^\A|\kappa_{|l|}|^{2t}|Q_{|l|}(m_0,m_0)|^2\\
&=\sum_{|l|=1}^\A|\kappa_{|l|}|^{2t}{d\choose |l|}(q-1)^{|l|}\geq d(q-1)(1-2\K)^{2t}.
\end{align*}
Then
\[
d(q-1)(1-2\K)^{2t_C} \sim d(q-1)e^{-4\K t_C} = e^{-C}.
\]
so the limit bound is
\begin{equation*}
\max_{m\in {\cal P}_{dq}}\chi^2_t(m;d) \geq e^{-C}.
\end{equation*}
for small $\K$.
Considering $C>0$ or $C<0$ shows that 
\begin{equation*}
t_{\text{cutoff}} = \frac{1}{4\K}\log(d(q-1))
\end{equation*}
is the cutoff time.
 For large positive $C$, both upper and lower bounds become small, while for small negative $C$, both upper and lower bounds become large. This cutoff reproduces Theorem 4 of \cite{CG2021} when $q=2$. Moreover when $\K=1/d$ it is a well known result.
\end{Example}
\begin{Remark}\label{Cayley}
  Consider a Cayley graph of a finite group $G$ with $k$ generators. If transitions occur by a generator chosen uniformly at random, we call it a random Cayley graph. If $1\ll \log k \ll \log |G|$, it has been established in \cite{HOT2026} that the random walk exhibits cutoff for all Abelian groups if $k\gg\log |G|$. Moreover, \cite{HOT2026}  recently extended this result; they established the cutoff if $k-m(G)\gg 1$, where $m(G)$ is the minimal size of a generating subset of $G$. In Example~\ref{simple:000}, $\log|G|=\log|\mathbb{Z}_q^d|=d\log q$ and the number of generators (possible moves) is $k={d\choose \A}(q-1)^\A$, and each move has the same probability if $\A$ is a fixed value almost surely. Since $1\ll \log k \ll \log |G|$ and $k\gg\log |G|$ for large $d$, the random walk exhibits cutoff, as expected.
\end{Remark}  
\begin{Remark}
Let $U$ be Bernoulli$(1/2)$. An orthogonal set of functions on $U$ is
$\{u^{(0)}=1,u^{(1)}=1-2U\}$. A hypergroup property says that for $a,b,j\in \{0,1\}$ 
\begin{equation}
u^{(a)}u^{(b)}= \sum_{j=0}^1c(a,b,j)u^{(j)}, 
\label{hypergroup:00}
\end{equation}
where $c(a,b,j) \geq 0$.
In this particular case the hypergroup property is almost trivial because it is clearly true if one or more of $a,b$ are zero and
${u^{(1)}}^2 = 1 = u^{(0)}$ for $U=0\text{~or~}1$. In the complex space of orthogonal functions $\{\theta_k^a\}, k,a,v \in {\cal V}_d$ the analogue of (\ref{hypergroup:00}) is 
\begin{equation*}
\theta_r^a\theta_r^{-b} = \sum_{v=0}^{q-1}c(a,b,v)\theta_r^{-v},\ a,b \in {\cal V}_q.
\end{equation*}
where $c(a,b,v) \geq 0$. In this context the hypergroup property is also easy because
\[
\theta_r^a\theta_r^{-b} = \theta_r^{-b+a\!\! \mod q}.
\]
The hypergroup property holds for $\big (Q_l(m;(\theta_k)\big )$ because of the form of the functions. That is
\begin{equation}
h_lQ_l\big (m;(\theta_k)\big )\overline{Q_l\big (n;(\theta_k)\big )}
= \sum_{g \in {\cal P}_{d,q}} c(m,n,g)\overline{Q_l\big (g;(\theta_k)\big )},
\label{hypergroup:02}
\end{equation}
for $c(m,n,g)\geq 0$. the hypergroup property implies (\ref{hypergroup:02}).\\ We fix normalization 
considering $m_0,n_0,g_0$ having all entries indexed by $j\geq 1$ zero, so that
\begin{equation}
\frac{Q_l\big (m;(\theta_k)\big )}{Q_l\big (m_0;(\theta_k)\big )}
\frac{\overline{Q_l\big (n;(\theta_k)\big )}}{\overline{Q_l\big (n_0;(\theta_k)\big )}}
= \sum_{g \in {\cal P}_{d,q}} c(m,n,g)
\frac{\overline{Q_l\big (g;(\theta_k)\big )}}{\overline{Q_l\big (g_0;(\theta_k)\big )}},
\label{hypergroup:02a}
\end{equation}
fixing $c(m_0,n_0,g_0)=1$. The denominators in (\ref{hypergroup:02a}) are all $h_l^{-1}$ so 
(\ref{hypergroup:02}) holds.
Using the dual orthogonal functions
knowing in the dual that
\[
\sum_{l \in {\cal P}_{d,q}}h_lQ_l(g;(\theta_k))\overline{Q_l(g;(\theta_k))} =  \p(g)^{-1}
\]
gives
\begin{equation}
\sum_{l \in {\cal P}_{d,q}}\Big (Q_l(m;(\theta_k))\overline{Q_l(n;(\theta_k))}\Big )h^2_lQ_l(g;(\theta_k))
= c(m,n,g) \p(g)^{-1} \geq 0.
\label{threeseries:00}
\end{equation}
Hypergroup properties are implicit from Propositions \ref{pcirc} and 
\ref{qexpansion} by taking the entries in $V$ to be fixed.
\cite{E1969,DG2012,DG2014} detail hypergroup properties in the Krawtchouk and 
Multivariate Krawtchouk polynomials. 
\end{Remark}
\begin{Proposition}\label{Generaltrans}
An expansion (\ref{spectral:001}) with given eigenfunctions 
$\big (Q_l(m;(\theta_k)\big )$ is a transition function, if and only if
\begin{equation*}
\kappa_l=h_l\mathbb{E}\big [Q_l(U;(\theta_k))\big ].
\end{equation*}
Then (\ref{spectral:001}) can be written as 
\begin{equation}
\p(n)\mathbb{E}\bigg [
 1 + \sum_{l:|l| \leq d}h_l^2Q_l(U;(\theta_k))Q_l(m;(\theta_k))\overline{Q_l(n;(\theta_k))}\bigg ],
\label{spectral:0011}
\end{equation}
with expectation over $U$. 
\end{Proposition}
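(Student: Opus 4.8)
The plan is to follow the Lancaster-type template of Propositions~\ref{pcirc:0} and~\ref{pcirc}, now executed on the grouped chain $(M_t)$ with the multivariate Krawtchouk eigenfunctions $\big(Q_l(\cdot;(\theta_k))\big)$ in the role played by $(\theta_r^a)$ in one dimension. \emph{Sufficiency} is essentially Proposition~\ref{qexpansion} together with the identity announced after \eqref{eigenv:100}: given a random vector $V$ on ${\cal V}_{\d,\N}$ with exchangeable entries and $\kappa_l=h_l\mathbb{E}\big[Q_l(V;(\theta_k))\big]$, I would first verify
\[
h_l\mathbb{E}\big[Q_l(V;(\theta_k))\big]=\mathbb{E}\big[\theta_1^{S_{l_1}}\theta_2^{S_{l_2}}\cdots\theta_{\d-1}^{S_{l_{\d-1}}}\big],
\]
which comes directly from the product form in \eqref{genfn:00}: extracting the coefficient of $w_1^{l_1}\cdots w_{\d-1}^{l_{\d-1}}$ in $\prod_{c=1}^\N\big(1+\sum_k w_k\theta_k^{V[c]}\big)$ produces a sum of $h_l^{-1}=\frac{\N!}{(\N-|l|)!\,l_1!\cdots l_{\d-1}!}$ monomials, each of which has expectation equal to the block-arranged form $\kappa_l$ of \eqref{eval:26} by exchangeability. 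Then Proposition~\ref{qexpansion} shows that the grouped increment chain with these eigenvalues has transition function \eqref{spectral:001}, which is automatically a probability distribution.

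\emph{Necessity.} Suppose \eqref{spectral:001} is a genuine transition function on ${\cal P}_{\N,\d}$, so $\mathbb{P}(\cdot\mid m)\ge0$ for all $m$. Multiplying \eqref{spectral:001} by $Q_{l'}(n;(\theta_k))$, summing over $n$, and using orthogonality of $\big(Q_l(\cdot;(\theta_k))\big)$ on the uniform multinomial $\p$ (with the normalization $h_l^{-1}=\mathbb{E}[Q_l\overline{Q_l}]$) yields the right-eigenfunction relation
\[
\sum_{n}\mathbb{P}(n\mid m)\,Q_{l'}(n;(\theta_k))=\kappa_{l'}\,Q_{l'}(m;(\theta_k)),\qquad 0<|l'|\le\N .
\]
I would then specialize to $m=m_0:=(\N,0,\ldots,0)$, at which $Q_{l'}(m_0;(\theta_k))=h_{l'}^{-1}$ — the value already used in Corollary~\ref{nxycorr:01} and in the remark on the $\chi^2$ distance. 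Let $V\in{\cal V}_{\d,\N}$ be a uniformly chosen vector whose count vector is drawn from $\mathbb{P}(\cdot\mid m_0)$; this $V$ is exchangeable, and $Q_{l'}(V;(\theta_k))$ depends on $V$ only through its count vector, so $\mathbb{E}\big[Q_{l'}(V;(\theta_k))\big]=\sum_n\mathbb{P}(n\mid m_0)Q_{l'}(n;(\theta_k))=\kappa_{l'}h_{l'}^{-1}$, i.e.\ $\kappa_{l'}=h_{l'}\mathbb{E}\big[Q_{l'}(V;(\theta_k))\big]$. Finally, \eqref{spectral:0011} follows by substituting this form of $\kappa_l$ back into \eqref{spectral:001} and interchanging the finite sum over $l$ with the expectation over $V$, the $|l|=0$ term contributing $1$ since $h_0=1$ and $Q_0\equiv1$.

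\emph{Main obstacle.} There is no serious analytic difficulty; the points needing care are all bookkeeping — the orthogonality constants $h_l^{-1}$, the identity $Q_l(m_0;(\theta_k))=h_l^{-1}$, the count of $h_l^{-1}$ monomials in the sufficiency step, and the observation that the increment extracted in the necessity step may be taken exchangeable, so that Proposition~\ref{qexpansion} genuinely reproduces \eqref{spectral:001} and the equivalence closes.
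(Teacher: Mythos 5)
Your proposal is correct and follows essentially the same route as the paper: the necessity argument (orthogonality giving the eigenfunction relation, then evaluating at $m_0$ where $Q_l(m_0;(\theta_k))=h_l^{-1}$ to identify $V$ with the conditional law given $m_0$) is exactly the paper's proof. For sufficiency the paper simply cites the hypergroup positivity (\ref{threeseries:00}), whereas you re-derive the same fact constructively by building the exchangeable-increment chain of Proposition \ref{qexpansion} and matching $h_l\mathbb{E}[Q_l(V;(\theta_k))]$ with the block form (\ref{eval:26}); these are two phrasings of the same underlying argument, and yours is sound.
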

\begin{proof}
The sufficiency follows because of (\ref{threeseries:00}). \\
For the necessity suppose that (\ref{spectral:001}) holds.
Then
\begin{equation}
\mathbb{E}\big [Q_l(n;(\theta_k))\mid m\big ] = \kappa_lQ_l(m;(\theta_k)).
\label{cond:222}
\end{equation}
Let $m=m_0$ with all zero entries in (\ref{cond:222}). Then $Q_l(m_0;(\theta_k))=h_l^{-1}$ so
\[
\kappa_lh_l^{-1} = \mathbb{E}\big [Q_l(n;(\theta_k))\mid m_0\big ],
\]
completing the necessity proof.
\end{proof}
\begin{Remark}
In view of Lemma \ref{QtoKCorr} another sum similar to (\ref{spectral:0011}) with $Q_l(U;(\theta_k))$ replaced by 
\begin{equation*}
(q - 1)^{-|l|}{|l|\choose l}
  K_{|l|}(d-u[0];d,q)
\label{QtoK:00}
\end{equation*}
is also a transition function. Then (\ref{spectral:0011}) simplifies to a series in the reproducing Kernel polynomials
\begin{align}
&\p(n)\Big \{1 + \sum_{l:|l|\leq d}
(q - 1)^{-|l|}{|l|\choose l}
  K_{|l|}(d-u[0];d,q)
  h_l^2Q_l(m;(\theta_k))\overline{Q_l(n;(\theta_k))} \Big \}\nonumber\\
  &=\p(n)\Big \{1 + \sum_{|l|=0}^d(q - 1)^{-|l|}{d \choose |l|}^{-1}
  K_{|l|}(d-u[0];d,q)Q_{|l|}(m,n) \Big \}.
  \label{Qidentity:00}
\end{align}
If $v[0]=d$ then (\ref{Qidentity:00}) is equal to 
\[
\p(n)\Big \{1 + \sum_{|l|=0}^d
  Q_{|l|}(m,n) \Big \} = 1.
\]
\end{Remark}
\begin{Remark}
The extreme points of the convex set of distributions (\ref{spectral:0011}) occur when $U$ is non-random. However our interest sometimes is in transition densities when $\mathbb{E}\big [Q_l(U;(\theta_k)\big ]$ is real.
The extreme points are then when entries of $X$ independently take fixed values $x[c]$ or $q-x[c]$ in \eqref{genfn:00}.
 Let $u[j]$ be the count of the number of $x[c]$ values which are $j$.
The generating function for these extreme points is
\begin{equation}
G_{\cal R}(u;w,q,d)=
\prod_{j=0}^{q-1}\Big (1 + \sum_{k=1}^{q-1}w_k\cos(2\pi/q\cdot jk)\Big )^{u[j]}.
\label{extremesymm:00}
\end{equation}
The functions, coefficients of $w_1^{l_1}\cdots w_{q-1}^{l_{q}-1}$,  will be denoted by $Q_l^{\cal R}(u)$.
To obtain the generating function, from (\ref{genfn:00}),
\begin{align*}
G_{\cal R}(u;w,q,d)&=
\prod_{c=1}^d\Big [
\frac{1}{2}\Big (1 + \sum_{k=1}^{q-1}w_k\theta_k^{x[c]}\Big )
+ \frac{1}{2}\Big (1 + \sum_{k=1}^{q-1}w_k\theta_k^{q-x[c]}\Big )\Big ]\nonumber\\
&=\prod_{c=1}^d\Big [1 + \sum_{k=1}^{q-1}w_k\cos (2\pi/q\dt x[c]k)\Big ]\nonumber\\
&=\prod_{j=0}^{q-1}\Big (1 + \sum_{k=1}^{q-1}w_k\cos(2\pi/q\dt  jk)\Big )^{u[j]}.
\end{align*}
\end{Remark}
\begin{Example}\label{Example:2a}
In the model of Example \ref{Example:2}, $p[1]=\gamma/2, p[q-1]=\gamma/2, p[0] = 1-\gamma$.
In a de Finetti model $\gamma$ is a random variable.
Then
\[
1 + \sum_{k=1}^{q-1}w_k
\sum_{j=0}^{q-1}p[j]\theta_k^j
= 1 + \sum_{k=1}^{q-1}w_k\Big (1 - \gamma(1-\cos(2\pi k/q))\Big ).
\]
so
\[
\kappa_l = h_l\mathbb{E}\Big [\prod_{k=1}^{q-1}\Big (1 - \gamma(1-\cos(2\pi k/q))\Big )^{l_k}\Big ].
\]
\end{Example}
\begin{Example}\label{bigchi}
Example \ref{simple:000} is extended by finding the chi-squared distance between between 
$\mathbb{P}_t(\cdot \mid m)$ and $\p(\cdot)$ when the transition density of the process is an extreme point in the convex 
set of transition densities (\ref{spectral:0011}) in Proposition \ref{Generaltrans} when $u$ is not random, apart from 
assuming $Q^{\cal R}_l(u)$ is generated by (\ref{extremesymm:00}). 
In this model jumps are made with each entry of $v$ such that $v[k]=1$ or $v[k]=q-1$ with probability $1/2$, $k\in [d]$, so the jumps are symmetric. The random walk is ergodic if $q$ is prime and $u^{(d)}[0]=0$. 
Assume that 
$m\equiv m^{(d)}, u\equiv u^{(d)}$ and
\[
  d^{-1}m^{(d)}[j] \to \xi[j],\
  d^{-1}u^{(d)}[j] \to \gamma[j],\ j=1,\ldots, q-1,
\]
where $\gamma[j]=\gamma[q-j]$.
The chi-squared estimate between $\mathbb{P}_t\big (\cdot \mid m)$ and the stationary distribution $\p(\cdot)$ in this model is
\begin{align*}
\chi^2_t(m;d) 
&= \sum_{l\in  {\cal P}_{d,q}, |l|\geq 1}|\kappa_l|^{2t}h_l|Q_l(m;(\theta_k))|^2\nonumber \\
&=\sum_{l\in  {\cal P}_{d,q}, |l|\geq 1}|h_lQ^{\cal R}_l(u)|^{2t}h_l|Q_l(m;(\theta_k))|^2\nonumber\\
&=\sum_{l\in  {\cal P}_{d,q}, |l|\geq 1}h_l^{-1}|h_lQ^{\cal R}_l(u)|^{2t}h^2_l|Q_l(m;(\theta_k))|^2\nonumber\\
&\sim \sum_{l\in  {\cal P}_{d,q}, |l|\geq 1}h_l^{-1}\prod_{k=1}^{q-1}
\Big [\Big (\sum_{j=1}^{q-1}\gamma[j]\cos(2\pi/q\cdot jk)\Big )^{2t}
\Big |(1 - |\xi| )+ \sum_{j=1}^{q-1}\xi[j]\theta_1^{kj}\Big |^2\Big ]^{l_k}
\nonumber\\
&=\sum_{|l|=1}^d{d\choose |l|}
\Big [\sum_{k=1}^{q-1}
\Big (\sum_{c=1}^{q-1}\gamma[c]\cos(2\pi/q\cdot ck)\Big )^{2t}
\Big  |(1 - |\xi| )+ \sum_{a=1}^{q-1}\xi[a]\theta_1^{ka}\Big |^2
\Big ]^{|l|}\\
&= \Big [1 + \sum_{k=1}^{q-1}
\Big ( \sum_{c=1}^{q-1}\gamma[c]\cos(2\pi/q\cdot ck)\Big )^{2t}\Big |(1 - |\xi| )+ \sum_{a=1}^{q-1}\xi[a]\theta_1^{ka}\Big |^2
\Big ]^d - 1
\end{align*}
If $q$ is prime the factors to the power $2t$ are less than 1.
To obtain an upper bound
\begin{align*}
\chi^2_t(m;d) &\leq
\exp \Big \{d
\sum_{k=1}^{q-1}
\Big (\sum_{c=1}^{q-1}\gamma[c]\cos(2\pi/q\cdot ck)\Big )^{2t}
\Big |(1 - |\xi| )+ \sum_{a=1}^{q-1}\xi[a]\theta_1^{ka}\Big |^2
\Big \}-1\nonumber\\
&\leq \exp \Big \{d\sum_{k=1}^{q-1}e^{2t\big ( - 1
+ \sum_{c=1}^{q-1}\gamma[c]\cos(2\pi/q\cdot ck)\big )}
\Big \}- 1\nonumber\\
&\leq \exp \Big \{d (q-1)e^{2t\max_k \big (- 1
+ \sum_{c=1}^{q-1}\gamma[c]\cos(2\pi/q\cdot ck)\big )}\Big \}- 1\nonumber\\
&\leq \exp \Big \{ d(q-1) e^{ 2t( m^*-1)}\Big \}
- 1,
\end{align*}
where $m^* = \max_k\sum_{c=1}^{q -1}\gamma[c]\cos(2\pi/q \cdot ck)$, and we have
\[
\max_{m\in\mathcal{V}_{q,d}}\chi^2_t(m;d)\leq
\exp \Big \{ d(q-1) e^{ 2t( m^*-1)}\Big \}
- 1.
\]
Choose
\[
  t_C=\frac{1}{2(1-m^*)}\Big(\log\big(d(q-1)\big)+C\Big),
\]
where $C$ is a constant. When $t=t_C$ the upperbound is equal to
$e^{e^{-C}}-1$.
On the other hand,
\begin{align*}
\max_{m\in\mathcal{V}_{q,d}}\chi^2_t(m;d)&\ge
\chi^2_t(m_0;d)=\sum_{l\in\mathcal{P}_{d,q},|l|\ge 1}h^{-1}_l|h_lQ_l^{\mathcal{R}}(u)|^{2t}\\
&\sim \sum_{|l|=1}^d{d\choose |l|}
\Big [\sum_{k=1}^{q-1}
\Big ((1 - |\gamma| )+ \sum_{c=1}^{q-1}\gamma[c]\cos(2\pi/q\cdot ck)\Big )^{2t}
\Big ]^{|l|}\\
&\ge d(q-1)m_*^{2t}\sim d(q-1)e^{2t(m_*-1)},
\end{align*}
where $m_* = \min_k\sum_{c=1}^{q-1}\gamma[c]\cos(2\pi/q \cdot ck)$, and we have
\[
\max_{m\in\mathcal{V}_{q,d}}\chi^2_{t_C}(m;d)
\ge e^{-\frac{m^*-1}{m_*-1}C}
\big(d(q-1)\big)^{-\frac{m^*-m_*}{1-m^*}}.
\]
This random walk also exhibits cutoff as in Example~\ref{simple:000}.
\end{Example}
\begin{Remark}
  In Example~\ref{bigchi}, the number of generators is $k=2^d$ and the condition $1\ll \log k\ll \log|G|$ is not satisfied unless $q$ is large (see Remark~\ref{Cayley}). Nevertheless, we have established that the random walk exibits cutoff.
  If $1\ll \log k\ll \log|G|$ is satisfied, \cite{HOT2026} has shown that the cutoff time is determined only by $k$ and $|G|$ when $m(G)\ll \log |G|$ and $k-m(G)\asymp k\gg 1$, where $m(G)$ is the minimal size of a generating subset of $G$. On the other hand, in Example~\ref{bigchi}, the cutoff time generally depends on $\gamma[c]$, $c\in[q-1]$, and in particular for $\gamma[1]=\cdots=\gamma[q-1]=1/(q-1)$, $m^*=m_*=-1$ holds.
\end{Remark}  
\begin{Remark}\label{Hamming:23}
Let $\partial(x,y) = |\{k:x[k]\ne y[k], k \in [d]\}|$ be the Hamming distance between $x,y \in {\cal V}_{q,d}$. Identify transitions in Hamming distance by noting that in (\ref{spectral:006}) 
$\partial(x,y) = \sum_{k=1}^q n_{x-y}[k]=d - n_{x-y}[0]$. We calculate the marginal transition distribution of $n_{x-y}[0]$ from (\ref{spectral:006}) in Corollary \ref{nxycorr:01}.
Consider
\[
\sum_{n_{x-y}:\ n_{x-y}[0]\text{~fixed}}\ \frac{1}{q^{d}}\frac{d!}{n_{x-y}[0]!\cdots n_{x-y}[q-1]!}Q_l(n_{x-y};(\theta_k))
\]
which is the coefficient $w^l$ in 
\begin{align}
&\sum_{n_{x-y};\  n_{x-y}[0]\text{~fixed}}
\ \frac{1}{q^{d}}\frac{d!}{n_{x-y}[0]!\cdots n_{x-y}[q-1]!}
\prod_{j=0}^{q-1}\Big (1 + \sum_{k=1}^{q-1}w_k\theta_k^j\Big )^{n_{x-y}[j]}\nonumber\\
&= \frac{1}{q^{d}}{d\choose n_{x-y}[0]}
\big (1 + \sum_{k=1}^{q-1}w_k\big )^{n_{x-y}[0]}
\Big ( \sum_{j=1}^{q-1} \big (1 + \sum_{k=1}^{q-1}w_k\theta^j_k\big )\Big )^{d-n_{x-y}[0]}\nonumber\\
&= \frac{1}{q^{d}}{d\choose n_{x-y}[0]}
\big (1 + \sum_{k=1}^{q-1}w_k\big )^{n_{x-y}[0]}
\big ( q-1 - \sum_{k=1}^{q-1}w_k\big )^{d-n_{x-y}[0]}.
\label{calc:766}
\end{align}
Note that $\sum_{j=1}^{q-1}\theta^j_k=-1$ for $k \in [q-1]$ to obtain the third line from the second line in 
(\ref{calc:766}).
The coefficient of $w^l$ in (\ref{calc:766}) is equal to 
\begin{align*}
&{d\choose n_{x-y}[0]}\Big (\frac{1}{q}\Big )^{n_{x-y}[0]}
\Big (1-\frac{1}{q}\Big )^{d-n_{x-y}[0]}\nonumber\\
&~~\times{|l|\choose l}{d\choose |l|}Q^K_{|l|}(d - n_{x-y}[0];d,1-1/q).
\end{align*}
The transition distribution is therefore
\begin{align}
\mathbb{P}(n_{x-y}[0]) =
&{d\choose n_{x-y}[0]}\Big (\frac{1}{q}\Big )^{n_{x-y}[0]}
\Big (1-\frac{1}{q}\Big )^{d-n_{x-y}[0]}
\nonumber\\
&~~\times\Big \{1 + \sum_{|l|=1}^d\gamma_{|l|}{d\choose |l|}
Q^K_{|l|}(d - n_{x-y}[0];d,1-1/q)\Big \},
\label{hmarkov:00}
\end{align}
where 
\begin{align*}
\gamma_{|l|} = \sum_{l:|l|\text{~fixed~}\leq d}{|l|\choose l}\kappa_l
= {d\choose |l|}^{-1}
\sum_{l:|l|\text{~fixed~}\leq d}
\mathbb{E}\big [Q_l(V;(\theta_k))\big ]
\end{align*}
Denote the Hamming distance at time $t$ from all states initially zero  as
 $\partial_t(h)=d - n_{0-h}^{(t)}[0]$.
The distribution of $\partial_t(h)$, derived in a similar way to (\ref{hmarkov:00}) is
\begin{align}
\mathbb{P}(\partial_t(h)) =
&{d\choose \partial_t(h)}
\Big (1-\frac{1}{q}\Big )^{\partial_t(h)}\Big (\frac{1}{q}\Big )^{d-\partial_t(h)}
\nonumber\\
&~~\times\Big \{1 + \sum_{|l|=1}^d\gamma_{|l|}^{(t)}{d\choose |l|}
Q^K_{|l|}(\partial_t(h);d,1-1/q)\Big \},
\label{hmarkov:00a}
\end{align}
where 
\begin{align*}
\gamma_{|l|}^{(t)} &= \sum_{l:|l|\text{~fixed~}\leq d}{|l|\choose l}\kappa_l^t. \end{align*}
The Hamming distance from zero is Markovian if $q=2$, however this is not generally true  if $q>2$.
It will be Markovian if and only if for exchangeable random variables in $V$ with
$v_1,\ldots, v_{h} \ne 0$, $h\in [d]$,
\begin{equation}
\mathbb{P}\big (V[1]=v_1,\ldots, V[h]=v_{h}, V[h+1]=0,\ldots ,V[d]=0\big )
\label{mcondition:00}
\end{equation}
does not depend on $v_1, \ldots v_{h}$. This is the same as saying that conditional on $V$ having $d-h$ entries zero the remaining $h$ entries are uniform on $[q-1]^h$. 
Set
\begin{equation*}
q_h = {d\choose h}
\mathbb{P}\big (V[1] > 0,\ldots, V[h] > 0, V[h+1]=0,\ldots ,V[d]=0\big ),
\end{equation*}
Then $\sum_{h=0}^d q_h = 1$. By considering the hypergeometric number of entries $k$ where 
$V[j] > 0,\ r[j] >0$, and denoting $\partial(r,0)=|\big \{k:r[k]>0, k \in [d]\big \}|$,
\begin{align}
\rho_r
&
=  \mathbb{E}\big [\prod_{k \in [d],\ r[k] > 0}\theta_{r[k]}^{V[k]}\ \big ]\nonumber\\
&
= \sum_{h=0}^{\partial(r,0)}q_h
\sum_{k=0}^h
(-1)^k
\frac{
{h\choose k}{d-h\choose \partial(r,0)-k}
}
{
{d\choose \partial(r,0)}
}
\nonumber\\
&
= \sum_{h=0}^{\partial(r,0)}q_h
Q^K_{h}(\partial(r,0);d,1-1/q).
\label{rhocalc:01}
\end{align}
The factors $-1$ appear because $\sum_{v=1}^{q -1}\theta_j^v = -1$ when $j \ne 0$.\\
In the count values $l$ of $r$, $\partial(r,0) = |l|$, so from (\ref{rhocalc:01}),
\begin{equation*}
\kappa_l =  \sum_{h=0}^{|l|}q_h
Q^K_{h}(|l|;d,1-1/q).
\end{equation*}
When (\ref{mcondition:00}) holds
\begin{equation}
\gamma_{|l|}^{(t)} = (q-1)^{|l|}\kappa_l^t.
\label{gammat:00}
\end{equation}
The right side of (\ref{gammat:00}) only depends on $|l|$.

%
%
\begin{Example}
A simple random walk. An entry in $V$ is chosen uniformly to change, taking values $1,q-1$ with probability $\gamma/2,\gamma/2$ or equal to zero with probability 
$1-\gamma$.
The Hamming distance is Markovian with $q_1 = \gamma,\ q_0=1-\gamma$.\\[0.1cm]
Extending this model suppose that a random number of $A$ entries are chosen to change in the same way.
The Hamming distance is Markovian with
\begin{equation*}
q_h = {d\choose h}\sum_{a=h}^d {a\choose h}\gamma^h(1-\gamma)^{a-h}\mathbb{P}\big (A=a\big ).
\end{equation*}
$\gamma$ could be take as a random variable in these models in each transition.
\end{Example}
\cite{H1997} studies cutoff times in models where (\ref{mcondition:00}) holds. They use an algebraic method to derive (\ref{rhocalc:01}).
\cite{GM2025} uses these Markov models to construct a quantum walk. The condition (\ref{mcondition:00}) is equivalent to their assumption in the model. These models are much simpler than the general models because the eigenstructure is determined by the one-dimension Krawtchouk polynomials 
instead of the multivariate Krawtchouk polynomials. The algebraic treatment in \cite{H1997} is still effective for quantum walks. 
\end{Remark}
\subsection{Central limit form of $Q_l( m;(\theta_j),d)$ as $d\to \infty$}
\label{CLTQ}
Let $M^{(d)}$ have a uniform Multinomial$(d,(1/q))$ distribution. Scale by taking 
$M^{(d)}= \frac{d}{q} +\sqrt{d}\ \M^{(d)}$. Then it is well known that 
$\M^{(d)}$ converges weakly to a singular multivariate normal $\M$ with zero means and
\begin{equation}
 \text{Cov}(\M[a],\M[b]) = \frac{1}{q}\big (\delta_{ab}- \frac{1}{q}\big ).
 \label{cov:256}
\end{equation}
The moment generating function of $\M$ is
\begin{equation}
\mathbb{E}\big [e^{\M\dt\psi}\big ] 
= \exp \Big \{\frac{1}{2}\sum_{a,b=0}^{q-1}\frac{1}{q}\big (\delta_{ab}- \frac{1}{q}\big )\psi[a]\psi[b]\Big \}.
\label{mgf:256}
\end{equation}
Singularity occurs because $\sum_{a=0}^{q-1}\mathfrak{M}[a]=0$. Removing the first entry
$\mathfrak{M}_+=(\mathfrak{M}[1],\cdots, \mathfrak{M}[q-1])$ is nonsingular.
The covariance matrix of $\mathfrak{M}_+$ is $\frac{1}{q}\big (I - \frac{1}{q}J\big )$,
where $I$ and $J$ are $q-1\times q-1$ matrices and $J$ is a matrix of unit entries. The inverse covariance matrix is $q\big (I + J\big )$, which has one eigenvalue $q^2$ and $q-2$ eigenvalues equal to $q$, thus with determinant $q^q$.
The density of $\mathfrak{M}_+$ is, for $\mathfrak{m}_+ \in \mathbb{R}^{q-1}$ is therefore
\begin{equation*}
\varphi(\mathfrak{m}_+;q)=\frac{q^{q/2}}{(2\pi)^{(q-1)/2}}
\cdot \exp\Big \{-\frac{q}{2}\sum_{a,b=1}^{q-1}(\delta_{ab}+1)\mathfrak{m}[a]\mathfrak{m}[b]\Big \}.
\end{equation*}
The generating function for the multivariate Krawthouk polynomials (\ref{genfn:00}) can be written in terms of $\mathfrak{m}_+$ as 
\begin{equation*}
\Big (1 + \sum_{k=1}^{q-1}w_k\Big )^{d - |\mathfrak{m}_+|}\prod_{j=1}^{q-1}\Big (1 + \sum_{k=1}^{q-1}w_k\theta_k^j\Big )^{m[j]}.
\end{equation*}
\begin{Lemma}\label{QCLT}
Take a weak limit as $d\to \infty$ in $M^{(N)}$ uniform multinomial$(d,(1/q))$ to obtain a 
multivariate normal $\M$ with covariance function (\ref{cov:256}) by setting  
$M^{(d)}=d/q +\sqrt{d}\ \M$.
Let
\begin{equation}
Q_l(\m,(\theta_j);\infty) =
\lim_{d\to \infty}Q_l(d/q +\sqrt{d}\ \m);(\theta_j),d)d^{-|l|/2},
\label{logcalc:05a}
\end{equation}
where $l\in \mathbb{N}^{q-1}$. 
$\big (Q_l(\m,(\theta_j);\infty)), (\overline{Q_l(\m,(\theta_j);\infty)})$ is a biorthogonal system of orthogonal polynomials on $\M$ with 
\begin{equation}
\mathbb{E}\big [Q_l(\M,(\theta_j);\infty)\overline{Q_{l^\prime}(\M,(\theta_j);\infty)}\big \}
= \frac{\delta_{ll^\prime}}{l[1]!\cdots l[q-1]!}.
\label{bio:00}
\end{equation}
$Q_l(\m,(\theta_j);\infty)$ is the coefficient of $\w^l$ in the generating function
\begin{align}
G(\m,\w,(\theta_j);\infty) =
\exp \Big \{
-\frac{1}{2q}\sum_{j=0}^{q-1}\big(\sum_{k=1}^{q-1}\w_k\theta_k^j\big)^2
+\sum_{j=0}^{q-1}\m[j]\sum_{k=1}^{q-1}\w_k\theta_k^j\Big \}.
\label{logcalc:01a}
\end{align}
An explicit expression is
\begin{align}
Q_l(\m,(\theta_j);\infty) &=
\sum_{a\in \mathbb{N}^q, |a|=|l|}\ \prod_{j=0}^{q-1}\frac{1}{a[j]!}H_{a[j]}(\m[j];q)\nonumber\\
&\quad\quad\quad\times\frac{a[0]!\cdots a[q-1]!}{l[0]!\cdots l[q-1]!}
Q_{a^+}(l;(\theta_k), |l|),
\label{MKHermite:00}
\end{align}
where $a^+ = (a[1],\cdots,a[q-1])$ and $\big (H_k(x;q)\big )_{k=0}^\infty$ are Hermite-Chebycheff polynomials orthogonal on a $N(0,1/q)$ distribution. A generating function for them is
\begin{equation}
\sum_{k=0}^\infty \frac{1}{k!}z^kH_k(x;q)=
\exp \Big \{- \frac{1}{2q}z^2 + xz\Big \}.
\label{HCgen:00}
\end{equation}
\end{Lemma}
\begin{proof}
Substitute $w = \w/\sqrt{d}$ and 
$m= \frac{d}{q} +\sqrt{d}\m$ in the generating function (\ref{genfn:00}) to obtain
\begin{equation*}
G(\m,\w,(\theta_j);d) = \prod_{j=0}^{q-1}
\Big (1 + \sum_{k=1}^{q-1}\frac{\w_k}{\sqrt{d}}\theta_k^j\Big )^{\frac{d}{q} +\sqrt{d}\m[j]}.
\end{equation*}
Now
\begin{align*}
&\log G(\m,\w,(\theta_j);d) \nonumber\\
&= \sum_{j=0}^{q-1}\Big (\frac{d}{q} +\sqrt{d}\m[j]\Big )\Big (
\sum_{k=1}^{q-1}\frac{\w_k}{\sqrt{d}}\theta_k^j
-\frac{1}{2}\big (\sum_{k=1}^{q-1}\frac{\w_k}{\sqrt{d}}\theta_k^j\big )^2\Big )
+ {\cal O}(\frac{1}{\sqrt{d}})\nonumber\\
&= -\frac{1}{2q}\sum_{j=0}^{q-1}\big (\sum_{k=1}^{q-1}\w_k\theta_k^j\big )^2
+\sum_{j=0}^{q-1}\m[j]\sum_{k=1}^{q-1}\w_k\theta_k^j
+ {\cal O}(\frac{1}{\sqrt{d}}).
\end{align*}
Therefore $G(\m,\w,(\theta_j);\infty)
:=\lim_{d\to \infty}G(\m,\w,(\theta_j);d)$ is equal to (\ref{logcalc:01a}).
The biorthogonality relationship (\ref{bio:00}) follows from a calculation that
\begin{equation*}
\mathbb{E}\big [G(\m,\w,(\theta_j);\infty)\overline{G(\m,\u,(\theta_j);\infty)}\big ]
=
\exp \Big \{ \sum_{k=1}^{q-1}\w_k\u_k\Big \}.
\end{equation*}
The right side of (\ref{logcalc:01a}), from (\ref{HCgen:00}), is equal to
\begin{align}
&\prod_{j=0}^{q-1}\
\sum_{a[j]=0}^\infty \frac{1}{a[j]!}H_{a[j]}(\m[j];d)\big (\sum_{k=1}^{q-1}\w_k\theta_k^j\big )^{a[j]}
\nonumber\\
&= \sum_{a\in \mathbb{N}^q}\ \prod_{j=0}^{q-1}\frac{1}{a[j]!}H_{a[j]}(\m[j];q)
\times \prod_{j=0}^{q-1}\big (\sum_{k=1}^{q-1}\w_k\theta_k^j\big )^{a[j]}.
\label{logcalc:02a}
\end{align}
The last factor in (\ref{logcalc:02a}), for fixed $a$ is identified as a generating function for the multivariate Krawtchouk polynomials from (\ref{genfn:02}). Take $d=|a|$, $a^+ = (a[1],\ldots a[q-1])$ 
(so $d-|a^+| = a[0]$) and write the factor as
\begin{equation}
 (\sum_{k=1}^{q-1}\w_k\big )^{a[0]}\prod_{j=1}^{q-1}\big (\sum_{k=1}^{q-1}\w_k\theta_k^j\big )^{a[j]}
\label{logcalc:03a}
\end{equation}
The coefficient of $\w^l$, $l\in \mathbb{N}^{q-1}$ in (\ref{logcalc:03a}) is equal to
\begin{equation}
h_{a}{|a|\choose l}Q_{a^+}(l;(\theta_k), |a|)
= \frac{a[0]!\cdots a[q-1]!}{l[0]!\cdots l[q-1]!}Q_{a^+}(l;(\theta_k), |a|).
\label{logcalc:04a}
\end{equation}
Therefore (\ref{MKHermite:00}) holds from
from (\ref{logcalc:02a}) and (\ref{logcalc:04a}).
\end{proof} 
\color{black}
\begin{Remark}
The generating function (\ref{logcalc:01a}) simplifies to
\begin{equation*}
G(\m,\w,(\theta_j);\infty)
= \exp \Big \{-\frac{1}{2}\sum_{k=1}^{q-1}\mathfrak{w}_k\mathfrak{w}_{q-k}
+ \sum_{k=1}^{q-1}\mathfrak{w}_k\sum_{j=0}^{q-1}\mathfrak{m}[j]\theta_k^j\Big \}
\end{equation*}
and it is possible to give an alternative expression for $Q_l(\m,(\theta_j);\infty)$ from this generating function.
\end{Remark}

\label{proposition:clt}
\begin{Proposition} A transition function obtained as a limit from (\ref{spectral:0011}) in Proposition \ref{Generaltrans}
as $d\to \infty$, $d/q + \sqrt{d}n \to \n$, $d/q + \sqrt{d}m \to \m$, $V/d \to \v$ is
for $\m_+,\n_+\in \mathbb{R}^{q-1}$, $\v \in \Delta_q$,
\begin{align*}
&\varphi(\n_+;q)\Big \{1 + \sum_{l\in \mathbb{N}^q, |l|>0}
\prod_{k=1}^{q-1}\Big ( 1 - |\v| + \sum_{j=1}^{q-1}\v[j]\theta_k^j\Big )^{l_k}\nonumber\\
&~~~~~~~~~~~~~~~~~~~~~~~\times \prod_{k=1}^{q-1}l_k!Q_l(\m,(\theta_j);\infty)\overline{Q_l(\n,(\theta_j);\infty)}\Big \}.
\end{align*}
\end{Proposition}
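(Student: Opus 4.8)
The plan is to pass to the limit term by term in the spectral expansion (\ref{spectral:0011}) of Proposition \ref{Generaltrans}. The Gaussian prefactor $\varphi(\n_+;\d)$ will come from the local limit theorem for the uniform Multinomial$(\N,(1/\d))$ law, and the generic summand will be controlled by combining the elementary asymptotics of $h_l={\N\choose l}^{-1}$ with Lemma \ref{QCLT} (the central limit form of $Q_l$ in the arguments scaled as $\N/\d+\sqrt\N\,\m$) and Lemma \ref{Kplimit} (the law of large numbers form of $Q_l$ in the argument scaled as $V/\N\to\v$). Throughout, write $n^{(\N)}=\N/\d+\sqrt\N\,\n$, $m^{(\N)}=\N/\d+\sqrt\N\,\m$ and $V^{(\N)}/\N\to\v$; the sum $\sum_{l:|l|\le\N}$ in (\ref{spectral:0011}) is read as $\sum_{0<|l|\le\N}$.

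\textbf{The prefactor.} By the local limit theorem for the uniform multinomial, $\p(n^{(\N)})\sim\N^{-(\d-1)/2}\varphi(\n_+;\d)$, with $\varphi$ the nonsingular $(\d-1)$-dimensional density displayed just before Lemma \ref{QCLT}. Since the lattice of admissible $n_+$ has spacing $1$ while the corresponding $\n_+$ lie on a lattice of spacing $\N^{-1/2}$ in each of the $\d-1$ free coordinates, summation over $n$ equals, to leading order, $\N^{(\d-1)/2}\int(\,\cdot\,)\,d\n_+$; this factor cancels the one in $\p(n^{(\N)})$, so the discrete kernel converges, as a measure in $\n_+$, to one with Lebesgue density having leading factor $\varphi(\n_+;\d)$.

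\textbf{A generic summand.} From $h_l={\N\choose l}^{-1}=\dfrac{(\N-|l|)!\,l_1!\cdots l_{\d-1}!}{\N!}$ one has $h_l\sim\big(\prod_{k=1}^{\d-1}l_k!\big)\N^{-|l|}$, hence $h_l^2\sim\big(\prod_k l_k!\big)^2\N^{-2|l|}$. By Lemma \ref{QCLT} and (\ref{logcalc:05a}),
\[
Q_l(m^{(\N)};(\theta_j),\N)\sim\N^{|l|/2}Q_l(\m,(\theta_j);\infty),\qquad
\overline{Q_l(n^{(\N)};(\theta_j),\N)}\sim\N^{|l|/2}\overline{Q_l(\n,(\theta_j);\infty)},
\]
while Lemma \ref{Kplimit}, applied with $V^{(\N)}/\N\to\v$, gives $h_lQ_l(V^{(\N)};(\theta_k))\to\prod_{k=1}^{\d-1}\big(1-|\v|+\sum_{j=1}^{\d-1}\v[j]\theta_k^j\big)^{l_k}$, so $Q_l(V^{(\N)};(\theta_k))\sim\big(\prod_k l_k!\big)^{-1}\N^{|l|}\prod_{k=1}^{\d-1}\big(1-|\v|+\sum_{j=1}^{\d-1}\v[j]\theta_k^j\big)^{l_k}$. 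Multiplying the four factors $h_l^2\,Q_l(V^{(\N)})\,Q_l(m^{(\N)})\,\overline{Q_l(n^{(\N)})}$, the exponents of $\N$ combine as $-2|l|+|l|+\tfrac{|l|}{2}+\tfrac{|l|}{2}=0$, and one reads off the limit
\[
\Big(\prod_{k=1}^{\d-1}l_k!\Big)\,\prod_{k=1}^{\d-1}\Big(1-|\v|+\sum_{j=1}^{\d-1}\v[j]\theta_k^j\Big)^{l_k}\,Q_l(\m,(\theta_j);\infty)\,\overline{Q_l(\n,(\theta_j);\infty)},
\]
which is precisely the $l$-th summand in the asserted expansion.

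\textbf{Interchange of limit and sum; the main obstacle.} It remains to pass the limit through $\sum_{0<|l|\le\N}$, now becoming a sum over all $l\in\mathbb N^{\d}$. I would argue by dominated convergence: the triangle inequality gives $\big|1-|\v|+\sum_{j=1}^{\d-1}\v[j]\theta_k^j\big|\le1$, with strict inequality for every $k$ as soon as $\v$ is not the point mass at $0$, so the product over $k$ of these moduli to the power $l_k$ decays geometrically in $|l|$; the remaining factors are dominated, uniformly in $\N$, by $C(\m,\n)^{|l|}$ times fixed Hermite--Chebyshev polynomials evaluated at $\m,\n$, via Cauchy estimates on the generating functions of Lemmas \ref{QCLT} and \ref{Kplimit} (here the cancellation $\sum_{j=0}^{\d-1}\theta_k^j=0$ is what keeps $\log G(\m,\w,(\theta_j);\N)$ bounded in $\N$ on a fixed polydisc). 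An alternative, avoiding explicit tail bounds, is to note that every prelimit kernel in (\ref{spectral:0011}) is a genuine probability mass function on the lattice, so the local convergence established above together with Scheff\'e's lemma (uniform integrability) upgrades to convergence of the kernels, the limit then being automatically the probability density $\varphi(\n_+;\d)\{1+\sum_{l}\cdots\}$ --- nonnegativity and total mass $1$ of that density also follow from the biorthogonality in Lemma \ref{QCLT}. Obtaining the uniform tail estimate, and in particular handling degenerate $\v$ cleanly, is the step I expect to demand the most care.
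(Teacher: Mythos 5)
Your proposal is correct and follows essentially the same route as the paper's (very terse) proof: combine Lemma \ref{Kplimit} for $h_lQ_l(V)$, the scaling limit (\ref{logcalc:05a}) of Lemma \ref{QCLT} for $Q_l(m)$ and $Q_l(n)$, and the asymptotic $h_l\sim \N^{-|l|}\prod_k l_k!$, with the powers of $\N$ cancelling exactly as you compute. You in fact go further than the paper, which silently replaces $\p(n)$ by $\varphi(\n_+;\d)$ and does not address the interchange of limit and sum; your local-limit-theorem and dominated-convergence/Scheff\'e arguments supply those missing justifications.
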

\begin{proof}
From (\ref{asymptotic:99}) in Lemma \ref{Kplimit}
\begin{equation*}
\lim_{d \to \infty}
h_lQ_l(V;(\theta_k))
= \prod_{k=1}^{q-1}\Big ( 1 - |\v| + \sum_{j=1}^{q-1}\v[j]\theta_k^j\Big )^{l_k}.
\end{equation*}
and from (\ref{logcalc:05a}) in Lemma \ref{QCLT}
\begin{equation*}
Q_l(\m,(\theta_j);\infty) =
\lim_{d\to \infty}Q_l(d/q+\sqrt{d}\ \m);(\theta_j),d)d^{-|l|/2},
\end{equation*}
similarly for $Q_l(\n,(\theta_j);\infty)$. Note that
\[
h_l\sim |l|^{-d}\prod_{k=1}^{q-1}l_l!.
\]

\end{proof}

\subsection{de Finetti exchangeability}
In a de Finetti random walk (\ref{rweqn:000}) on ${\cal V}_{q,\infty}$ the entries of $Z_\tau$ are exchangeable. 
$Z_\tau$ is controlled by $V_\tau$ which has a random probability distribution $p$ with de Finetti measure $\mu$.
Take any $d$ entries within $Z_\tau$. For given $a_1,\ldots, a_d \in {\cal V}_q$ 
\[
\mathbb{P}\big (Z_\tau[1]= a_1,\ldots , Z_{\tau}[d] = a_d)
= \int_{\Delta_q}p[a_1]\cdots p[a_d]\mu(dp),
\]
which is invariant under any distinct $d$ entries. A point process of \iid random variables $(V_\tau)_{\tau=1}^\infty$ on ${\cal V}_{q,\infty}$ is generated. We could think of a point process $(\theta_1^{V_\tau})$ instead of $(V_\tau)$.
Let $\widehat{X}_0$ have all entries unity. Then 
\begin{equation*}
\widehat{X}_t = \otimes_{\tau=1}^t\theta_1^{V_\tau}.
\end{equation*}
The grouped eigenvalues are
\begin{equation}
\kappa_l =\int_{\Delta_q}\prod_{k=1}^{q-1}\Big (\sum_{j=0}^{q-1}\theta_1^{kj}p[j]\Big )^{l_k}\mu(dp).
\label{rhoval:00}
\end{equation}

If $q=2$ there is only one term $l_1$ in (\ref{rhoval:00}). Denoting the measure of $p[1]$ as $\mu_1$,\\
\[
\kappa_l = 
\int_{[0,1]} \big (p[0] - p[1]\big )^{l_1} \mu_1(dp[1])
 = \int_{[0,1]} \big (1  - 2p[1]\big )^{l_1}\mu_1(dp[1]).
\]
This is related to the spins we know about in \cite{G2025}.
\begin{Example}
In Example \ref{Example:2a} jumps are made one step to the left or right with probability $\gamma/2$ or no step with probability $1-\gamma$ where $\gamma$ is a random variable with a measure we call $\mu$. Then from (\ref{rhoval:00})
\begin{equation*}
\kappa_l = \int_{[0,1]}\prod_{k=1}^{q_\circ-1}\big (1-\gamma +\gamma \cos \big (2\pi/q\cdot k\big )\big )^{l_k}\mu(d\gamma).
\end{equation*}
\end{Example}
\section*{Acknowledgements} This paper was mainly written while the first author was visiting the second author at the Institute of Statistical Mathematics, Tachikawa, Tokyo in 2025. He thanks the Institute for their support and hospitality. Thanks to Andrew Barbour for asking about a random walk on $\mathbb{Z}_q^d$ after discussing a random walk on the hypercube $\mathbb{Z}_2^d$. Thanks to Persi Diaconis, Hiroshi Mizukawa and Andrea Collevecchio for comments on this research. The second author was supported in part by JSPS KAKENHI Grants 24K06876.

\end{document}